\documentclass[preprint,12pt,3p]{elsarticle}
\usepackage{verbatim}
\usepackage{amsmath,amsthm,amsfonts,amssymb}
\usepackage{algorithm}
\usepackage{algpseudocode}
\usepackage{cases}
\usepackage[hidelinks]{hyperref}
\usepackage{graphics}
\usepackage{enumitem}
\usepackage{booktabs}
\usepackage{geometry}
\usepackage{longtable}
\usepackage{tabularx}
\usepackage{graphicx}
\usepackage{booktabs}
\usepackage{float}
\usepackage{caption}
\usepackage{tikz}
\usepackage{subcaption}
\graphicspath{{./figure/}}
\usepackage{multicol}
\usepackage[toc,title,page]{appendix}
\usepackage{fullpage}
\usepackage[normalem]{ulem}
\usepackage{xcolor,sectsty}
\definecolor{astral}{RGB}{46,116,181}
\subsectionfont{\color{astral}}
\sectionfont{\color{astral}}
\usetikzlibrary{calc}
\newtheorem{theorem}{Theorem}[section]

\newtheorem{remark}{Remark}

\newtheorem{definition}[theorem]{Definition}
\newtheorem{proposition}[theorem]{Proposition}
\newtheorem{example}[theorem]{Example}

\journal{be decided}
\definecolor{darkslategray}{rgb}{0.18, 0.31, 0.31}
\definecolor{warmblack}{rgb}{0.0, 0.26, 0.26}

\usetikzlibrary{matrix}
\tikzset{matrixlayer/.style={
matrix of math nodes,nodes in empty cells,
nodes={draw=#1,fill=#1!15,minimum size=10mm,anchor=center,text=black},
column sep=-.5*\pgflinewidth,
row sep=-.5*\pgflinewidth
}}

\tikzstyle{tensor}=[rectangle,draw=blue!50,fill=blue!20,thick]

\begin{document}

\begin{frontmatter}
\title{\textcolor{warmblack}{Iterative Methods for Computing the T-Square Root of Third-Order Tensors}}
\author{ Hemant Sharma, Nachiketa Mishra}

\address{Department of Mathematics,\\
        Indian Institute of Information Technology, Design and Manufacturing Kancheepuram, Chennai, India\\
        {\bf Email:} 
        sharmahemant39@gmail.com;  nmishra@iiitdm.ac.in
        }
                        
\vspace{-2cm}

\begin{abstract}
We develop and analyze iterative methods for computing the principal square root
of third-order tensors under the T-product framework. Tensor extensions of the
Newton iteration (quadratic convergence) and the Denman--Beavers iteration
(geometric convergence with simultaneous computation of the inverse square root)
are proposed, with rigorous convergence guarantees established via the
Fourier-domain block-diagonalization of the T-product. We apply these methods to
image processing, introducing Tensor Decorrelated Grayscale conversion,
T-Whitening, and optimal color transfer under the T-product geometry. We also
formulate the Tensor Bures--Wasserstein distance and prove it defines a valid
metric on the space of T-positive definite tensors. Numerical experiments confirm
rapid convergence and demonstrate that the proposed tensor-based techniques offer
improved structural preservation and cross-channel decorrelation compared to
classical methods.

\medskip
\noindent\textbf{Keywords:}
Tensor Square Root, T-Product, Newton Iteration, Denman--Beavers Iteration,
Bures--Wasserstein Distance, Tensor Whitening, Color Transfer, Image Processing.

\medskip
\noindent\textbf{MSC (2020):}
15A16, 15A69, 65F60, 94A08.
\end{abstract}

\end{frontmatter}

\section{Introduction}\label{sec:intro}

\subsection{Motivation}\label{ssec:motivation}

One of the central challenges in modern data analysis is the extension of
matrix-based tools to higher-order tensor representations, which arise naturally
in applications such as image processing~\cite{kilmer2013third},
computer vision~\cite{hao2013facial}, signal processing~\cite{martin2013order},
and machine learning~\cite{newman2018stable}. In the matrix setting, many
important operations---including spectral decomposition, polar decomposition,
and the computation of matrix functions---have well-developed theory and efficient
algorithms~\cite{higham2008functions}. Extending these operations to tensors in a
mathematically principled way is essential for exploiting the rich structure of
multidimensional data.

A particularly important matrix function is the \emph{square root} of a positive
definite matrix, which plays a central role in several contexts:
\begin{enumerate}
  \item \textbf{The Bures--Wasserstein distance.}\;
        The Bures distance~\cite{bhatia2019bures}, also known as the Wasserstein-2
        distance in the context of Gaussian distributions, provides a geometrically
        meaningful way to compare positive definite matrices. It is formulated
        using the trace and the matrix square root:
        \[
          d_{\mathrm{BW}}(A, B)
          = \Bigl[\operatorname{tr}(A) + \operatorname{tr}(B)
            - 2\,\operatorname{tr}\!\bigl(
            (A^{1/2} B\, A^{1/2})^{1/2}\bigr)\Bigr]^{1/2}.
        \]
        This distance has found wide application in quantum information theory,
        optimal transport, and statistics.

  \item \textbf{Whitening and decorrelation.}\;
        The inverse square root of a covariance matrix is used to whiten data,
        removing second-order correlations and normalizing variance. This is a
        standard preprocessing step in machine learning, signal processing, and
        image analysis.

  \item \textbf{Optimal transport.}\;
        The optimal transport (Monge) map between two zero-mean Gaussian
        distributions with covariances $C_s$ and $C_t$ is given by
        $T = C_s^{-1/2}(C_s^{1/2} C_t\, C_s^{1/2})^{1/2} C_s^{-1/2}$,
        which requires the computation of matrix square roots.
\end{enumerate}

With the increasing prevalence of tensor-valued data in real-world
applications---such as color images, video sequences, hyperspectral imagery, and
spatiotemporal signals---there is a clear need to generalize these operations to
the tensor setting. This motivates the central question of the present work:
\emph{How can one efficiently and reliably compute the square root of a
third-order tensor, and what are the practical consequences for multidimensional
data analysis?}

\subsection{The T-Product Framework}\label{ssec:tproduct}

Among various tensor algebraic frameworks, the T-product, introduced by Kilmer
and Martin~\cite{kilmer2011factorization} and further developed
in~\cite{kilmer2013third, lund2020tensor}, provides an elegant algebraic
structure for operating on third-order tensors. Given two tensors
$\mathcal{A}, \mathcal{B} \in \mathbb{R}^{n \times n \times p}$, their T-product
is defined as
\begin{equation}\label{eq:tprod}
  \mathcal{C} = \mathcal{A} * \mathcal{B}
  = \mathrm{fold}\bigl(\mathrm{bcirc}(\mathcal{A})
    \cdot \mathrm{unfold}(\mathcal{B})\bigr),
\end{equation}
where $\mathrm{bcirc}(\mathcal{A}) \in \mathbb{R}^{np \times np}$ is the
block-circulant matrix formed from the frontal slices of $\mathcal{A}$.

The T-product enables the definition of tensor functions via matrix functions
applied to the block-circulant representation. For a scalar function $f$, the
tensor function $f(\mathcal{A})$ is defined~\cite{lund2020tensor} as
\begin{equation}\label{eq:tfunc}
  f(\mathcal{A})
  = \mathrm{fold}\bigl(f\!\bigl(\mathrm{bcirc}(\mathcal{A})\bigr)
    \cdot \mathrm{unfold}(\mathcal{E}_1)\bigr),
\end{equation}
where $\mathcal{E}_1$ is the tensor whose first frontal slice is the identity
matrix and all other slices are zero.

A key computational advantage of the T-product is that the block-circulant matrix
$\mathrm{bcirc}(\mathcal{A})$ is block-diagonalized by the discrete Fourier
transform (DFT). Specifically, if
$\hat{\mathcal{A}} = \mathrm{fft}(\mathcal{A}, \text{axis}=3)$, then the
T-product reduces to slice-wise matrix multiplication~\cite{lund2020tensor} in the Fourier domain:
\begin{equation}\label{eq:fourier-tprod}
  \hat{C}^{(i)} = \hat{A}^{(i)} \cdot \hat{B}^{(i)},
  \quad i = 1, \ldots, p.
\end{equation}
This decomposition is the linchpin of all algorithms developed in this paper: it
allows us to reduce tensor operations to $p$ independent matrix operations, each
of size $n \times n$.

Traditional matrix-based approaches treat multi-dimensional data as a collection
of independent matrices, often operating slice by slice. While this is
computationally straightforward, it overlooks the inherent multi-way structure
present in tensor data. In contrast, the T-product framework enables structured
operations across all slices simultaneously through the block-circulant
representation and the Fourier domain. This results in a richer algebra that
naturally captures correlations across modes---a property that is particularly
important for tasks like whitening, decorrelation, and distance computation.

\subsection{Problem Statement}\label{ssec:problem}

Given a T-positive definite tensor
$\mathcal{A} \in \mathbb{R}^{n \times n \times p}$, we seek to compute its
\emph{principal T-square root}: the unique T-positive definite tensor
$\mathcal{X} \in \mathbb{R}^{n \times n \times p}$ satisfying
\begin{equation}\label{eq:tsqrt-problem}
  \mathcal{X} * \mathcal{X} = \mathcal{A}.
\end{equation}
By the Fourier-domain reduction~\eqref{eq:fourier-tprod}, this is equivalent to
computing the principal matrix square root of each Fourier-domain frontal slice:
\begin{equation}\label{eq:slice-sqrt}
  \hat{X}^{(i)} = \bigl(\hat{A}^{(i)}\bigr)^{1/2},
  \quad i = 1, \ldots, p,
\end{equation}
followed by the inverse transform
$\mathcal{X} = \mathrm{ifft}(\hat{\mathcal{X}}, \text{axis}=3)$.

While various iterative methods for computing the matrix square root---such as the
Newton method~\cite{higham2008functions},
the Denman--Beavers algorithm~\cite{denman1976matrix}, and
the Schulz iteration~\cite{laasonen1958iterative}---are well-established in the
literature, a direct and structured approach for computing the square root of
third-order tensors has not been formally presented under the T-product framework.
Moreover, the applications of the tensor square root to image processing problems
such as whitening, grayscale conversion, color transfer, and geometric distance
computation have not been systematically explored.

\subsection{Related Work}\label{ssec:related}

The T-product was introduced by Kilmer and
Martin~\cite{kilmer2011factorization} and has since become a widely used framework
for tensor computations. The tensor singular value decomposition (T-SVD) under
this framework was developed in~\cite{kilmer2013third} and has been applied to
facial recognition~\cite{hao2013facial}, image
compression~\cite{martin2013order}, and neural
networks~\cite{newman2018stable}.

The theory of tensor functions under the T-product was formalized by
Lund~\cite{lund2020tensor}, who defined tensor functions via the block-circulant
representation and studied their algebraic properties. The Fr\'{e}chet derivative
of tensor T-functions was subsequently analyzed in~\cite{lund2023frechet}.

In the matrix setting, iterative methods for the square root have a long history.
The Newton iteration for the matrix square root dates back
to~\cite{laasonen1958iterative} and was extensively analyzed by
Higham~\cite{higham2008functions}. The Denman--Beavers
iteration~\cite{denman1976matrix} is a coupled method that simultaneously
computes the square root and its inverse, and is known for its numerical
stability. Comprehensive treatments of matrix square root algorithms can be found
in~\cite{higham2008functions}.

The Bures--Wasserstein distance between positive definite matrices was studied
in~\cite{bhatia2019bures}, where its metric properties and connections to optimal
transport were established. However, a tensor generalization of this distance
under the T-product framework, along with a systematic treatment of the tensor
square root and its applications to image processing, has been lacking.

\subsection{Contributions}\label{ssec:contributions}

The main contributions of this work are as follows:

\begin{enumerate}
  \item \textbf{Tensor Newton iteration (Section~\ref{sec:methods}).}\;
        We formulate the Newton iteration for computing the principal T-square
        root of a third-order tensor under the T-product and prove that it
        converges quadratically (Theorem~\ref{thm:newton-strengthened}).

 \item \textbf{Tensor Denman--Beavers iteration (Section~\ref{sec:methods}).}
 We extend the Denman--Beavers coupled iteration to the tensor setting and
 establish geometric convergence (Theorem~\ref{thm:db-conv}). This method
 simultaneously produces both $\mathcal{A}^{1/2}$ and $\mathcal{A}^{-1/2}$,
 which is advantageous for applications requiring both quantities.
  \item \textbf{Tensor Bures--Wasserstein distance
        (Section~\ref{sec:applications}).}\;
        We define the Tensor Bures--Wasserstein (TBW) distance on the space of
        T-positive definite tensors and prove that it is a valid metric
        (Proposition~\ref{prop:tbw-metric}). This provides a principled geometric
        measure for comparing tensor-valued data such as covariance tensors of
        multi-channel images.

  \item \textbf{Tensor Decorrelated Grayscale conversion
        (Section~\ref{sec:applications}).}\;
        We propose a grayscale conversion method that adapts to the statistical
        structure of the image through the inverse tensor square root of the
        covariance tensor, yielding improved edge preservation and contrast
        compared to fixed-weight methods.

  \item \textbf{T-Whitening (Section~\ref{sec:applications}).}\;
        We develop a tensor whitening procedure that preserves the
        multidimensional structure of image data, achieving decorrelation quality
        comparable to matrix whitening while avoiding the information loss
        inherent in flattening-based approaches.

  \item \textbf{Tensor color transfer (Section~\ref{sec:applications}).}\;
        We formulate an optimal color transfer method that computes the Monge
        transport map between source and target color distributions under the
        T-product geometry, naturally accounting for cross-channel correlations.

  \item \textbf{Comprehensive numerical experiments
        (Section~\ref{sec:applications}).}\;
        We provide detailed numerical comparisons of the proposed methods,
        including convergence analysis, wall-clock timings, and quantitative
        evaluation of image processing quality using SSIM(Structural Similarity Index Measure), edge preservation
        metrics, and decorrelation indices.
\end{enumerate}

\subsection{Organization of the Paper}\label{ssec:organization}

The remainder of this paper is organized as follows.
Section~\ref{sec:methods} develops the iterative algorithms for computing the
tensor T-square root. We begin with the necessary preliminaries on the T-SVD and
tensor square root (Section~\ref{ssec:tsvd}), then present the Newton iteration
(Section~\ref{ssec:newton}) and the Denman--Beavers iteration
(Section~\ref{ssec:db}), each accompanied by convergence proofs and numerical
examples. A comparison of the two methods is provided in
Section~\ref{ssec:method-comparison}.

Section~\ref{sec:applications} demonstrates the practical applications of the
tensor square root in image processing. We define the tensor covariance
(Section~\ref{ssec:tensor-cov}), formulate the Tensor Bures--Wasserstein distance
(Section~\ref{ssec:tbw}), and develop algorithms for Tensor Decorrelated Grayscale
conversion (Section~\ref{ssec:tgrayscale}), T-Whitening
(Section~\ref{ssec:twhite}), and tensor color transfer
(Section~\ref{ssec:color-transfer}). Comprehensive numerical comparisons are
presented in Sections~\ref{ssec:comparison}--\ref{ssec:classical-compare}.

\noindent Section~\ref{sec:conclusion} summarizes the key findings and discusses directions
for future research.

\subsection{Notation}\label{ssec:notation}

We collect the notation used throughout the paper for reference. Scalars are
denoted by lowercase letters ($a, b, \ldots$), vectors by bold lowercase
($\mathbf{a}, \mathbf{b}, \ldots$), matrices by uppercase letters ($A, B, \ldots$),
and third-order tensors by calligraphic letters
($\mathcal{A}, \mathcal{B}, \ldots$). The frontal slices of a tensor
$\mathcal{A} \in \mathbb{R}^{n \times m \times p}$ are denoted
$\mathcal{A}(:,:,k)$ or $A^{(k)}$ for $k = 1, \ldots, p$. The Fourier transform
of $\mathcal{A}$ along the third mode is denoted
$\hat{\mathcal{A}} = \mathrm{fft}(\mathcal{A}, \text{axis}=3)$, with frontal
slices $\hat{A}^{(i)}$. The T-product is denoted by $*$, the T-transpose by
$\mathcal{A}^{\top}$, and the T-product inverse by $\mathcal{A}^{-1}$. The identity tensor under the T-product
is denoted $\mathcal{I}$. The tensor Frobenius norm is defined as
$\|\mathcal{A}\|_F = \bigl(\sum_{i,j,k} a_{ijk}^2\bigr)^{1/2}$.
The tensor trace is denoted $\operatorname{Ttr}(\cdot)$.

\section{Iterative Methods for the Tensor T-Square Root}\label{sec:methods}

In this section, we develop iterative algorithms for computing the principal
square root of a third-order tensor under the T-product framework. We begin by
recalling the relevant tensor decompositions and then present two methods---the
Newton iteration and the Denman--Beavers iteration---together with rigorous
convergence analyses and illustrative numerical examples.

\subsection{Preliminaries: T-SVD and Tensor Square Root}\label{ssec:tsvd}

Let $\mathcal{A} \in \mathbb{R}^{n \times n \times p}$ be a third-order tensor.
Its tensor singular value decomposition (T-SVD) under the T-product framework is
\begin{equation}\label{eq:tsvd}
  \mathcal{A} = \mathcal{U} * \mathcal{S} * \mathcal{V}^{*},
\end{equation}
where $\mathcal{U}, \mathcal{V} \in \mathbb{R}^{n \times n \times p}$ are
orthogonal tensors and $\mathcal{S} \in \mathbb{R}^{n \times n \times p}$ is an
\emph{f-diagonal} tensor whose frontal slices contain the singular values of
$\mathcal{A}$, computed in the Fourier domain. This decomposition generalizes the
classical matrix SVD to the tensor setting using the algebra of the
T-product~\cite{kilmer2011factorization}.

The square root of the f-diagonal tensor $\mathcal{S}$, denoted
$\mathcal{S}^{1/2}$, is constructed slice-wise in the Fourier domain
following the T-SVD framework and tensor t-function
formalism~\cite{kilmer2011factorization, kilmer2013third, lund2020tensor}.
Let $\sigma_j^{(i)} > 0$ for $j = 1, \ldots, n$ and $i = 1, \ldots, p$
denote the singular values appearing on the diagonal of the $i$-th
Fourier-domain frontal slice $\hat{S}^{(i)}$. Then $\mathcal{S}^{1/2}$
is defined via
\begin{equation}\label{eq:sqrtS}
	\bigl(\widehat{\mathcal{S}^{1/2}}\,\bigr)^{(i)}
	= \operatorname{diag}\!\left(
	\sqrt{\sigma_1^{(i)}},\;
	\sqrt{\sigma_2^{(i)}},\; \ldots,\;
	\sqrt{\sigma_n^{(i)}}
	\right),
	\qquad i = 1, \ldots, p,
\end{equation}
followed by the inverse FFT along the third mode,
$\mathcal{S}^{1/2} = \mathrm{ifft}\bigl(\widehat{\mathcal{S}^{1/2}},\,
\text{axis}=3\bigr)$. Using this, a T-positive semi-definite tensor
$\mathcal{A}$ can be factored as
$\mathcal{A} = \mathcal{M} * \mathcal{M}^{\top}$, where
$\mathcal{M} = \mathcal{U} * \mathcal{S}^{1/2}$.

\begin{definition}[T-Positive Definite Tensor]\label{def:tpd}
A tensor $\mathcal{A} \in \mathbb{R}^{n \times n \times p}$ is said to be
\emph{T-positive definite} if every frontal slice of its Fourier transform,
\[
  \hat{A}^{(i)} = \bigl(\mathrm{fft}(\mathcal{A}, \text{axis}=3)\bigr)^{(i)},
  \quad i = 1, \ldots, p,
\]
is a symmetric positive definite matrix.
\end{definition}

\begin{definition}[Principal T-Square Root]\label{def:tsqrt}
Let $\mathcal{A} \in \mathbb{R}^{n \times n \times p}$ be a T-positive definite
tensor. The \emph{principal T-square root} of $\mathcal{A}$ is the unique
T-positive definite tensor $\mathcal{A}^{1/2} \in \mathbb{R}^{n \times n \times p}$
satisfying
\begin{equation}\label{eq:tsqrt-def}
  \mathcal{A}^{1/2} * \mathcal{A}^{1/2} = \mathcal{A}.
\end{equation}
\end{definition}

\begin{remark}[Fourier-domain reduction]\label{rem:fourier}
Since the T-product is diagonalized by the discrete Fourier transform (DFT) along
the third mode, computing the T-square root reduces to computing matrix square
roots of the frontal slices in the Fourier domain:
\[
  \hat{X}^{(i)} = \bigl(\hat{A}^{(i)}\bigr)^{1/2},
  \quad i = 1, \ldots, p,
\]
followed by the inverse transform
$\mathcal{X} = \mathrm{ifft}(\hat{\mathcal{X}}, \text{axis}=3)$.
All iterative methods developed in this section exploit this reduction.
\end{remark}

\subsection{Newton Iteration for the T-Square Root}\label{ssec:newton}

The Newton iteration for the matrix square root~\cite{higham2008functions} is a
classical method with quadratic convergence. We now extend it to the tensor
setting under the T-product.

Given a T-positive definite tensor $\mathcal{A} \in \mathbb{R}^{n \times n \times p}$,
the Newton iteration for computing $\mathcal{A}^{1/2}$ is defined by
\begin{equation}\label{eq:newton-iter}
	\mathcal{X}_{k+1}
	= \frac{1}{2}\bigl(\mathcal{X}_k + \mathcal{X}_k^{-1} * \mathcal{A}\bigr),
	\qquad k = 0, 1, 2, \ldots,
\end{equation}
with initial tensor $\mathcal{X}_0 = \mathcal{I}$, where $\mathcal{X}_k^{-1}$
denotes the T-product inverse of $\mathcal{X}_k$ and $\mathcal{I} \in
\mathbb{R}^{n \times n \times p}$ is the identity tensor (whose first frontal
slice is $I_n$ and remaining slices are zero under the T-product convention).

As described in Remark~\ref{rem:fourier}, the iteration~\eqref{eq:newton-iter} is
implemented slice-wise in the Fourier domain. After transforming $\mathcal{A}$ via
the FFT, each frontal slice is updated independently:
\[
  \hat{X}^{(k)}_{j+1}
  = \frac{1}{2}\left(\hat{X}^{(k)}_j
    + \hat{A}^{(k)}\bigl(\hat{X}^{(k)}_j\bigr)^{-1}\right),
  \quad k = 1, \ldots, p,
\]
where subscript $j$ denotes the iteration index. The result is then recovered via
the inverse FFT.

\begin{algorithm}[H]
\caption{Newton Iteration for Tensor T-Square Root}
\label{alg:newton}
\begin{algorithmic}[1]
\Require Tensor $\mathcal{A} \in \mathbb{R}^{n \times n \times p}$, maximum
         iterations $N_{\max}$, tolerance $\varepsilon > 0$
\Ensure  Tensor square root $\mathcal{X}$ such that
         $\mathcal{X} * \mathcal{X} \approx \mathcal{A}$
\State $\hat{\mathcal{A}} \leftarrow \mathrm{fft}(\mathcal{A}, \text{axis}=3)$
\For{$k = 1$ to $p$}
  \State $\hat{X}^{(k)} \leftarrow \hat{A}^{(k)}$
  \Comment{Initialize each slice with $\hat{A}^{(k)}$}
\EndFor
\For{$j = 1$ to $N_{\max}$}
  \For{$k = 1$ to $p$}
    \State $\hat{X}^{(k)} \leftarrow \dfrac{1}{2}\!\left(\hat{X}^{(k)}
           + \hat{A}^{(k)}\bigl(\hat{X}^{(k)}\bigr)^{-1}\right)$
  \EndFor
  \State Compute residual:
         $r_j = \left(\sum_{k=1}^{p}
         \bigl\|\hat{X}^{(k)}\hat{X}^{(k)} - \hat{A}^{(k)}\bigr\|_F^2
         \right)^{1/2}$
  \If{$r_j < \varepsilon$}
    \State \textbf{break}
  \EndIf
\EndFor
\State $\mathcal{X} \leftarrow \mathrm{ifft}(\hat{\mathcal{X}}, \text{axis}=3)$
\State \Return $\mathcal{X}$
\end{algorithmic}
\end{algorithm}

\subsubsection{Convergence Analysis of the Newton Iteration}
\label{sssec:newton-convergence}

\begin{theorem}[Convergence of the Newton Iteration]\label{thm:newton-strengthened}
	Let $\mathcal{A} \in \mathbb{R}^{n \times n \times p}$ be T-positive definite with Fourier slices $\hat{A}^{(i)}$, $i = 1, \ldots, p$. Denote the eigenvalues of $\hat{A}^{(i)}$ by $\lambda_j^{(i)} > 0$, and define
	\[
	\lambda_{\min} := \min_{i,j} \lambda_j^{(i)}, \qquad \lambda_{\max} := \max_{i,j} \lambda_j^{(i)}.
	\]
	Consider the Newton iteration~(9) implemented in the Fourier domain with initialization $\hat{X}_0^{(i)} = \hat{A}^{(i)}$ for each $i = 1, \ldots, p$ (as in Algorithm~1). Then:
	\begin{enumerate}[label=(\alph*)]
		\item \textbf{(Well-definedness)} All iterates $\mathcal{X}_k$ are T-positive definite and hence T-invertible.
		\item \textbf{(Global convergence)} The sequence $\{\mathcal{X}_k\}$ converges to $\mathcal{A}^{1/2}$ in the tensor Frobenius norm.
		\item \textbf{(Quadratic convergence)} There exists a constant $C > 0$ depending on $\lambda_{\min}$ and $\lambda_{\max}$ such that
		\[
		\|\mathcal{X}_{k+1} - \mathcal{A}^{1/2}\|_F \leq C \|\mathcal{X}_k - \mathcal{A}^{1/2}\|_F^2
		\]
		for all $k \geq 0$.
	\end{enumerate}
\end{theorem}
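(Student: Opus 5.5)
The plan is to reduce everything to the Fourier slices via Remark~\ref{rem:fourier}. Then I will use the fact that, with the initialization $\hat X_0^{(i)}=\hat A^{(i)}$, every iterate is a polynomial-rational function of the single Hermitian matrix $\hat A^{(i)}$. Fix a slice $i$ and write $\hat A^{(i)} = Q_i \Lambda_i Q_i^{*}$ with $Q_i$ unitary and $\Lambda_i=\operatorname{diag}(\lambda^{(i)}_1,\dots,\lambda^{(i)}_n)$. (For real $\mathcal{A}$ the Fourier slices are Hermitian, which is the sense in which Definition~\ref{def:tpd} is read.) I will show by induction on $k$ that $\hat X_k^{(i)} = Q_i D_k^{(i)} Q_i^{*}$ with $D_k^{(i)}$ diagonal and positive. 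The base case is immediate. For the step, $\hat A^{(i)}(\hat X_k^{(i)})^{-1} = Q_i \Lambda_i (D_k^{(i)})^{-1} Q_i^{*}$ commutes with $\hat X_k^{(i)}$, so
\[
D_{k+1}^{(i)} = \tfrac12\bigl(D_k^{(i)} + \Lambda_i (D_k^{(i)})^{-1}\bigr).
\]
This is positive entrywise. Hence every Fourier slice of $\mathcal{X}_k$ is Hermitian positive definite, i.e.\ $\mathcal{X}_k$ is T-positive definite and T-invertible. The conjugate symmetry $\hat X_k^{(p+2-i)}=\overline{\hat X_k^{(i)}}$ is preserved by the update, so the inverse FFT returns a real tensor. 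This proves (a).

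The problem is now a family of scalar Heron iterations $d_{k+1}=\tfrac12(d_k+\lambda/d_k)$ with $d_0=\lambda>0$, one for each eigenvalue $\lambda=\lambda^{(i)}_j$. The scalar identity
\[
d_{k+1}-\sqrt{\lambda} = \frac{(d_k-\sqrt{\lambda})^2}{2d_k}
\]
shows that $d_k\ge\sqrt\lambda$ for all $k\ge1$, and that $d_k-\sqrt\lambda\le \tfrac12(d_k-\sqrt\lambda)$ once $d_k\ge \sqrt\lambda$. For $k=0$ we only have $d_0=\lambda>0$, but $d_1\ge\sqrt\lambda$ regardless. Hence $d_k\to\sqrt\lambda$ monotonically from step $1$ on. Since $\hat X_k^{(i)}-(\hat A^{(i)})^{1/2}=Q_i(D_k^{(i)}-\Lambda_i^{1/2})Q_i^*$ and the Frobenius norm is unitarily invariant, Parseval along the third mode gives
\[
\|\mathcal{X}_k-\mathcal{A}^{1/2}\|_F^2=\tfrac1p\sum_{i,j}(d^{(i)}_{j,k}-\sqrt{\lambda^{(i)}_j})^2 \to 0.
\]
Here I use uniqueness of the principal square root slice-wise (Definition~\ref{def:tsqrt}) to identify the limit with $\mathcal{A}^{1/2}$. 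This proves (b).

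For (c) I need a uniform lower bound $d^{(i)}_{j,k}\ge c>0$ for all $i,j,k$. For $k\ge1$ we have $d\ge\sqrt{\lambda}\ge\sqrt{\lambda_{\min}}$, and for $k=0$ we have $d_0=\lambda\ge\lambda_{\min}$. So $c=\min(\lambda_{\min},\sqrt{\lambda_{\min}})$ works. Then each scalar error satisfies $|e_{k+1}|\le e_k^2/(2c)$. Squaring and summing gives $\sum e_{k+1}^4$-type terms bounded by $(\sum e_k^2)^2$. Translating through Parseval with the factor $1/p$ yields
\[
\|\mathcal{X}_{k+1}-\mathcal{A}^{1/2}\|_F\le \frac{\sqrt p}{2c}\,\|\mathcal{X}_k-\mathcal{A}^{1/2}\|_F^2 .
\]
This holds for all $k\ge0$, so one may take $C=\sqrt p/(2\min(\lambda_{\min},\sqrt{\lambda_{\min}}))$. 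The constant depends on $\lambda_{\min}$ (and $p$). The theorem's dependence on $\lambda_{\max}$ is then harmless, since a larger constant still satisfies the inequality.

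The main obstacle is bookkeeping rather than analysis. One part is getting the Parseval normalization of the FFT right, so that the constant involves $\sqrt p$ correctly. The other is checking that the commuting structure justifies diagonalizing all iterates simultaneously. This is exactly where the initialization $\hat X_0^{(i)}=\hat A^{(i)}$ (or $\mathcal I$) is essential: for a general starting tensor the iteration need not even stay Hermitian.
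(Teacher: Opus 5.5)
Your proof is correct, for the only true reading of (c), in which $C$ may also depend on $p$. It has the same skeleton as the paper's proof: decouple into Fourier slices, get a slice-wise quadratic estimate, and recombine with Parseval and $\sum a_i^2\le(\sum a_i)^2$. The slice-level argument, however, is different.

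The paper does not derive the slice estimate. It cites Higham's matrix Newton theory, appeals to a change of variables to the sign iteration, and asserts $C_i=\tfrac12\|(\hat A^{(i)})^{1/2}\|\,\|(\hat A^{(i)})^{-1/2}\|^2$. Combined with $\|\mathcal T\|_F^2=\sum_i\|\hat T^{(i)}\|_F^2$, this gives $C=\tfrac12\lambda_{\max}^{1/2}/\lambda_{\min}$.

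You instead use the fact that $\hat X_0^{(i)}=\hat A^{(i)}$ commutes with $\hat A^{(i)}$. This lets you diagonalize all iterates simultaneously and reduce to the scalar Heron map, with the exact identity $e_{k+1}=e_k^2/(2d_k)$. This is the eigenvalue-level form of the sign-iteration reduction. The substitution that actually works is $Z_k=(\hat A^{(i)})^{-1/2}\hat X_k^{(i)}$, whose eigenvalues $d_k/\sqrt{\lambda}$ obey $z\mapsto\tfrac12(z+z^{-1})$.

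The paper's route is shorter because it outsources to known matrix theory. Yours is self-contained, and because the bookkeeping is explicit it produces a constant that is valid where the paper's is not. There are two problems with the paper's constant:
\begin{enumerate}
\item With the unnormalized FFT, under which the T-product becomes slice-wise multiplication, Parseval carries the factor $1/p$ that you used. The paper's argument therefore only supports $\sqrt p$ times its stated constant.
\item At $k=0$ the scalar ratio is $1/(2\lambda)$, which exceeds $\tfrac12\lambda_{\max}^{1/2}/\lambda_{\min}$ whenever $\lambda_{\max}<1$. For $n=p=1$ and $\mathcal A=\tfrac14$ one gets $\mathcal X_1=\tfrac58$ and an error ratio of $2$. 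The paper's $C$ is $1$, while your $C=1/\bigl(2\min(\lambda_{\min},\sqrt{\lambda_{\min}})\bigr)=2$ is sharp. Your $\min(\lambda_{\min},\sqrt{\lambda_{\min}})$ handles exactly this case.
\end{enumerate}

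You should not present the $\sqrt p$ as mere bookkeeping. Say explicitly that it cannot be removed, so the theorem's phrase ``depending on $\lambda_{\min}$ and $\lambda_{\max}$'' must be read as also depending on $p$. Here is an example:
\begin{itemize}
\item Take $n=1$, $p\ge 2$, $\hat A^{(1)}=4$ and $\hat A^{(i)}=1$ for $i\ge 2$. This is a real tensor, with $\mathcal A(1,1,1)=1+3/p$ and $\mathcal A(1,1,j)=3/p$ for $j\ge2$.
\item Only the DC slice carries error, so $\|\mathcal X_k-\mathcal A^{1/2}\|_F=e_k/\sqrt p$.
\item Hence the ratio $\|\mathcal X_{k+1}-\mathcal A^{1/2}\|_F/\|\mathcal X_k-\mathcal A^{1/2}\|_F^2=\sqrt p/(2d_k)$ tends to $\sqrt p/4$. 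This is unbounded in $p$ while $\lambda_{\min}=1$ and $\lambda_{\max}=4$ stay fixed.
\end{itemize}

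There are two small slips in your write-up:
\begin{itemize}
\item The contraction should read $d_{k+1}-\sqrt\lambda\le\tfrac12(d_k-\sqrt\lambda)$.
\item The summation step should read $\sum_{i,j}e_{k+1}^2\le\frac{1}{4c^2}\sum_{i,j}e_k^4\le\frac{1}{4c^2}\bigl(\sum_{i,j}e_k^2\bigr)^2$.
\end{itemize}
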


\begin{proof}
	The proof proceeds by reducing to the Fourier domain and applying strengthened versions of the classical matrix Newton iteration analysis.
	
	\medskip
	\noindent\textbf{Step 1: Fourier-domain decoupling.}
	Applying the DFT along the third mode, the tensor Newton iteration decouples into $p$ independent matrix Newton iterations: for each $i = 1, \ldots, p$,
	\[
	\hat{X}_{k+1}^{(i)} = \frac{1}{2}\left(\hat{X}_k^{(i)} + \hat{A}^{(i)} \left(\hat{X}_k^{(i)}\right)^{-1}\right), \qquad \hat{X}_0^{(i)} = \hat{A}^{(i)}.
	\]
	
	\noindent\textbf{Step 2: Slice-wise analysis with initialization $X_0 = A$.}
	For each slice $i$, $\hat{A}^{(i)}$ is symmetric positive definite with eigenvalues $\lambda_1^{(i)} \geq \cdots \geq \lambda_n^{(i)} > 0$. The matrix Newton iteration for the square root with initialization $X_0 = A$ (rather than $X_0 = I$) is a well-studied variant. By Higham~\cite{higham2008functions}, Section~6.3, Theorem~6.9, the iteration
	\[
	X_{k+1} = \frac{1}{2}(X_k + A X_k^{-1}), \qquad X_0 = A,
	\]
	applied to a symmetric positive definite matrix $A$ satisfies:
	\begin{itemize}
		\item All iterates $X_k$ are symmetric positive definite.
		\item $X_k \to A^{1/2}$ as $k \to \infty$.
		\item The convergence is quadratic: there exists $C_i > 0$ such that
		\[
		\|\hat{X}_k^{(i)} - (\hat{A}^{(i)})^{1/2}\|_F \leq C_i \|\hat{X}_{k-1}^{(i)} - (\hat{A}^{(i)})^{1/2}\|_F^2 \quad \text{for all } k \geq 0.
		\]
	\end{itemize}
	
	To verify that $X_0 = A$ lies within the basin of quadratic convergence, we use the scaled Newton iteration analysis. Define $Z_k^{(i)} = (\hat{A}^{(i)})^{-1/2} \hat{X}_k^{(i)} (\hat{A}^{(i)})^{-1/2}$. Then $Z_0^{(i)} = (\hat{A}^{(i)})^{1/2}$ is SPD, and the iteration on $Z_k^{(i)}$ becomes the standard matrix sign iteration, which converges quadratically for any SPD initial matrix (see~\cite{higham2008functions}, Theorem~5.2).
	
	\medskip
	\noindent\textbf{Step 3: Global tensor bound.}
	The slice-wise convergence constants satisfy $C_i = \frac{1}{2} \|(\hat{A}^{(i)})^{1/2}\| \cdot \|(\hat{A}^{(i)})^{-1/2}\|^2$, which depends on the condition number $\kappa_i = \lambda_{\max}^{(i)} / \lambda_{\min}^{(i)}$. Define
	\[
	C := \max_{1 \leq i \leq p} C_i = \frac{1}{2} \cdot \frac{\lambda_{\max}^{1/2}}{\lambda_{\min}}.
	\]
	Using the Parseval-type identity $\|\mathcal{T}\|_F^2 = \sum_{i=1}^p \|\hat{T}^{(i)}\|_F^2$, we obtain
	\begin{align*}
		\|\mathcal{X}_{k+1} - \mathcal{A}^{1/2}\|_F^2 
		&= \sum_{i=1}^p \|\hat{X}_{k+1}^{(i)} - (\hat{A}^{(i)})^{1/2}\|_F^2 \\
		&\leq \sum_{i=1}^p C_i^2 \|\hat{X}_k^{(i)} - (\hat{A}^{(i)})^{1/2}\|_F^4 \\
		&\leq C^2 \sum_{i=1}^p \|\hat{X}_k^{(i)} - (\hat{A}^{(i)})^{1/2}\|_F^4 \\
		&\leq C^2 \left(\sum_{i=1}^p \|\hat{X}_k^{(i)} - (\hat{A}^{(i)})^{1/2}\|_F^2\right)^2 \\
		&= C^2 \|\mathcal{X}_k - \mathcal{A}^{1/2}\|_F^4,
	\end{align*}
	where the last inequality uses $\sum a_i^2 \leq (\sum a_i)^2$ for non-negative $a_i$ (since $\|x\|_4 \leq \|x\|_2$ in $\mathbb{R}^p$). Taking square roots yields
	\[
	\|\mathcal{X}_{k+1} - \mathcal{A}^{1/2}\|_F \leq C \|\mathcal{X}_k - \mathcal{A}^{1/2}\|_F^2,
	\]
	establishing quadratic convergence from the first iteration.
\end{proof}

\smallskip
\noindent\textit{Remark on initialization.}
The above analysis assumes $\mathcal{X}_0 = \mathcal{I}$ for theoretical clarity.
For the practical initialization $\mathcal{X}_0 = \mathcal{A}$ used in
Algorithm~\ref{alg:newton}, the same quadratic convergence holds since
$\mathcal{A}$ lies closer to $\mathcal{A}^{1/2}$ than $\mathcal{I}$ for
T-positive definite tensors with eigenvalues bounded away from zero, ensuring
that the iterates enter the basin of quadratic convergence immediately.

\begin{remark}[Choice of initialization]\label{rem:init}
Algorithm~\ref{alg:newton} initializes each Fourier slice with
$\hat{X}^{(k)}_0 = \hat{A}^{(k)}$ rather than the identity matrix. This is
because, for positive definite matrices, the initialization $X_0 = A$ provides a
better starting point that already lies in the basin of quadratic convergence for
a wider class of inputs. The identity initialization $X_0 = I$ is used in the
theoretical analysis for clarity, but in practice $X_0 = A$ is preferred and is
used in all our numerical experiments.
\end{remark}

\subsubsection{Numerical Example}\label{sssec:newton-example}

\begin{example}\label{ex:newton}
Let $\mathcal{A} \in \mathbb{R}^{3 \times 3 \times 3}$ be the T-positive
definite tensor defined by
\[
  \mathcal{A}(:,:,1) = \begin{bmatrix}
    3 & 1 & 0 \\ 1 & 4 & 1 \\ 0 & 1 & 3
  \end{bmatrix}, \quad
  \mathcal{A}(:,:,2) = \begin{bmatrix}
    2 & 0.5 & 0 \\ 0.5 & 2 & 0.5 \\ 0 & 0.5 & 2
  \end{bmatrix}, \quad
  \mathcal{A}(:,:,3) = \begin{bmatrix}
    1 & 0 & 0 \\ 0 & 1 & 0 \\ 0 & 0 & 1
  \end{bmatrix}.
\]
Applying Algorithm~\ref{alg:newton} with $N_{\max} = 10$ and tolerance
$\varepsilon = 10^{-6}$, convergence is achieved in 5 iterations.
The computed T-square root $\mathcal{X} = \mathcal{A}^{1/2}$ has frontal slices:
\[
  \mathcal{X}(:,:,1) = \begin{bmatrix}
    1.63499 & 0.29426 & -0.02899 \\
    0.29426 & 1.90498 &  0.29426 \\
   -0.02899 & 0.29426 &  1.63499
  \end{bmatrix},
\]
\[
  \mathcal{X}(:,:,2) = \begin{bmatrix}
    0.58655 & 0.05896 & -0.00286 \\
    0.05896 & 0.49317 &  0.05896 \\
   -0.00286 & 0.05896 &  0.58655
  \end{bmatrix},
\]
\[
  \mathcal{X}(:,:,3) = \begin{bmatrix}
    0.20962 & -0.05470 &  0.01352 \\
   -0.05470 &  0.21371 & -0.05470 \\
    0.01352 & -0.05470 &  0.20962
  \end{bmatrix}.
\]
The residual satisfies
$\|\mathcal{X} * \mathcal{X} - \mathcal{A}\|_F = 6.89 \times 10^{-15}$, which is
near machine precision.
\end{example}

\smallskip
\noindent\textbf{Diagnosing the convergence rate.}
Throughout our numerical experiments, we track two ratios of successive
residuals to diagnose the empirical rate of convergence, following the
standard Q-order framework~\cite{nocedal2006numerical}. Given the residual
sequence $r_k = \|\mathcal{X}_k * \mathcal{X}_k - \mathcal{A}\|_F$, we report:
\begin{itemize}
	\item The \emph{first-order ratio} $\rho_k := r_k / r_{k-1}$. If the
	iteration is Q-linearly convergent with rate $\rho \in (0,1)$, then
	$\rho_k \to \rho$ as $k \to \infty$. A ratio that decreases steadily
	toward zero indicates \emph{Q-superlinear} convergence.
	\item The \emph{second-order ratio} $q_k := r_k / r_{k-1}^{2}$. If the
	iteration is Q-quadratically convergent, i.e.\ $r_k \leq C\,r_{k-1}^{2}$
	for some constant $C > 0$, then $q_k$ stays bounded by $C$. A bounded
	$q_k$ together with a rapidly decreasing $\rho_k$ is the empirical
	signature of Q-quadratic convergence.
\end{itemize}
Conversely, if $\rho_k$ settles at a roughly constant value (and $q_k$ grows),
the observed behavior is only Q-linear.

The convergence history of the Newton iteration for Example~\ref{ex:newton} is
shown in Table~\ref{tab:newton-conv}. The
residual decreases by roughly two orders of magnitude per iteration after the
initial phase, confirming the quadratic convergence established in
Theorem~\ref{thm:newton-strengthened}.
\begin{table}[H]
	\centering
\caption{Convergence of the Newton iteration (Algorithm~\ref{alg:newton})
	for Example~\ref{ex:newton}. The residual is
	$r_k = \|\mathcal{X}_k * \mathcal{X}_k - \mathcal{A}\|_F$; see the
	``Diagnosing the convergence rate'' paragraph preceding this table for the
	interpretation of the ratios $\rho_k$ and $q_k$. The bounded $q_k$ together
	with rapidly decreasing $\rho_k$ confirms Q-quadratic convergence, as
	established in Theorem~\ref{thm:newton-strengthened}.}
	\label{tab:newton-conv}
	\begin{tabular}{cccc}
		\toprule
		Iteration $k$ & Residual $r_k$ & Ratio $r_k/r_{k-1}$ & Ratio $r_k / r_{k-1}^2$ \\
		\midrule
		0 & $6.95 \times 10^{0}$  & ---                    & --- \\
		1 & $5.28 \times 10^{-1}$ & $7.60 \times 10^{-2}$  & $1.09 \times 10^{-2}$ \\
		2 & $5.52 \times 10^{-2}$ & $1.05 \times 10^{-1}$  & $1.98 \times 10^{-1}$ \\
		3 & $7.80 \times 10^{-4}$ & $1.41 \times 10^{-2}$  & $2.56 \times 10^{-1}$ \\
		4 & $1.61 \times 10^{-7}$ & $2.06 \times 10^{-4}$  & $2.65 \times 10^{-1}$ \\
		5 & $6.89 \times 10^{-15}$& $4.28 \times 10^{-8}$  & $2.66 \times 10^{-1}$ \\
		\bottomrule
	\end{tabular}
\end{table}

\subsection{Denman--Beavers Iteration for the T-Square Root}\label{ssec:db}

The Denman--Beavers (DB) iteration~\cite{denman1976matrix} is a coupled iteration
that simultaneously computes both the square root and its inverse. We extend it to
the T-product setting as follows.

Given a T-positive definite tensor
$\mathcal{A} \in \mathbb{R}^{n \times n \times p}$, the DB iteration is defined by
the coupled recursion:
\begin{equation}\label{eq:db-iter}
	\mathcal{X}_{k+1}
	= \frac{1}{2}\bigl(\mathcal{X}_k + \mathcal{Y}_k^{-1}\bigr), \qquad
	\mathcal{Y}_{k+1}
	= \frac{1}{2}\bigl(\mathcal{Y}_k + \mathcal{X}_k^{-1}\bigr),
\end{equation}
with initial values $\mathcal{X}_0 = \mathcal{A}$ and
$\mathcal{Y}_0 = \mathcal{I}$, where $\mathcal{Y}_k^{-1}$ and
$\mathcal{X}_k^{-1}$ denote the T-product inverses.

A key advantage of the DB iteration over the Newton iteration is that it
simultaneously produces approximations to both $\mathcal{A}^{1/2}$ (via the
$\mathcal{X}_k$ sequence) and $\mathcal{A}^{-1/2}$ (via the $\mathcal{Y}_k$
sequence). This is particularly useful in applications such as whitening and color
transfer (Section~\ref{sec:applications}), where both quantities are required.

As with the Newton iteration, the DB iteration is implemented slice-wise in the
Fourier domain. For each frontal slice $k = 1, \ldots, p$:
\begin{equation}\label{eq:db-slice}
  \hat{X}^{(k)}_{j+1}
  = \frac{1}{2}\bigl(\hat{X}^{(k)}_j
    + (\hat{Y}^{(k)}_j)^{-1}\bigr), \qquad
  \hat{Y}^{(k)}_{j+1}
  = \frac{1}{2}\bigl(\hat{Y}^{(k)}_j
    + (\hat{X}^{(k)}_j)^{-1}\bigr).
\end{equation}

\begin{algorithm}[H]
\caption{Denman--Beavers Iteration for Tensor T-Square Root}
\label{alg:db}
\begin{algorithmic}[1]
\Require Tensor $\mathcal{A} \in \mathbb{R}^{n \times n \times p}$, maximum
         iterations $N_{\max}$, tolerance $\varepsilon > 0$
\Ensure  Tensor square root $\mathcal{X} \approx \mathcal{A}^{1/2}$ and inverse
         square root $\mathcal{Y} \approx \mathcal{A}^{-1/2}$
\State $\hat{\mathcal{A}} \leftarrow \mathrm{fft}(\mathcal{A}, \text{axis}=3)$
\For{$k = 1$ to $p$}
  \State $\hat{X}^{(k)} \leftarrow \hat{A}^{(k)}$
  \Comment{Initialize $\mathcal{X}_0 = \mathcal{A}$}
  \State $\hat{Y}^{(k)} \leftarrow I$
  \Comment{Initialize $\mathcal{Y}_0 = \mathcal{I}$}
\EndFor
\For{$j = 1$ to $N_{\max}$}
  \For{$k = 1$ to $p$}
    \State $\hat{X}^{(k)}_{\mathrm{new}} \leftarrow \dfrac{1}{2}\!\left(
           \hat{X}^{(k)} + \bigl(\hat{Y}^{(k)}\bigr)^{-1}\right)$
    \State $\hat{Y}^{(k)}_{\mathrm{new}} \leftarrow \dfrac{1}{2}\!\left(
           \hat{Y}^{(k)} + \bigl(\hat{X}^{(k)}\bigr)^{-1}\right)$
    \State $\hat{X}^{(k)} \leftarrow \hat{X}^{(k)}_{\mathrm{new}}$, \;
           $\hat{Y}^{(k)} \leftarrow \hat{Y}^{(k)}_{\mathrm{new}}$
  \EndFor
  \State Compute residual:
         $r_j = \left(\sum_{k=1}^{p}
         \bigl\|\hat{X}^{(k)}\hat{X}^{(k)} - \hat{A}^{(k)}\bigr\|_F^2
         \right)^{1/2}$
  \If{$r_j < \varepsilon$}
    \State \textbf{break}
  \EndIf
\EndFor
\State $\mathcal{X} \leftarrow \mathrm{ifft}(\hat{\mathcal{X}}, \text{axis}=3)$
\State $\mathcal{Y} \leftarrow \mathrm{ifft}(\hat{\mathcal{Y}}, \text{axis}=3)$
\State \Return $\mathcal{X}$, $\mathcal{Y}$
\end{algorithmic}
\end{algorithm}

\subsubsection{Convergence Analysis of the Denman--Beavers Iteration}
\label{sssec:db-convergence}

\begin{theorem}\label{thm:db-conv}
Let $\mathcal{A} \in \mathbb{R}^{n \times n \times p}$ be T-positive definite.
Consider the Denman--Beavers iteration~\eqref{eq:db-iter} with initial values
$\mathcal{X}_0 = \mathcal{A}$ and $\mathcal{Y}_0 = \mathcal{I}$. Then:
\begin{enumerate}
  \item The sequences $\{\mathcal{X}_k\}$ and $\{\mathcal{Y}_k\}$ are well-defined
        (all iterates remain T-invertible).
  \item $\mathcal{X}_k \to \mathcal{A}^{1/2}$ and
        $\mathcal{Y}_k \to \mathcal{A}^{-1/2}$ as $k \to \infty$.
  \item The convergence is at least geometric (linear) in the tensor Frobenius
        norm: there exist constants $C > 0$ and $r \in (0,1)$ such that for all
        sufficiently large $k$,
        \begin{equation}\label{eq:db-linear}
          \bigl\|\mathcal{X}_k - \mathcal{A}^{1/2}\bigr\|_F \leq C\,r^k,
          \qquad
          \bigl\|\mathcal{Y}_k - \mathcal{A}^{-1/2}\bigr\|_F \leq C\,r^k.
        \end{equation}
\end{enumerate}
\end{theorem}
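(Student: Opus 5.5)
Following Remark~\ref{rem:fourier}, I would first decouple the iteration in the Fourier domain. Since the DFT along the third mode turns T-products, T-inverses and the identity tensor into slice-wise matrix products, matrix inverses and $I_n$, the recursion~\eqref{eq:db-iter} is equivalent to the $p$ independent matrix recursions~\eqref{eq:db-slice} with $\hat X_0^{(i)}=\hat A^{(i)}$ and $\hat Y_0^{(i)}=I$. Each $\hat A^{(i)}$ is symmetric positive definite by Definition~\ref{def:tpd}. So everything reduces to the matrix DB iteration for an SPD matrix $A$, and the tensor statements then follow from the Parseval identity $\|\mathcal T\|_F^2=\tfrac1p\sum_i\|\hat T^{(i)}\|_F^2$ (up to the normalization constant of the FFT, which only changes $C$).

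For a fixed slice, the key step is the classical invariant: every iterate $X_k, Y_k$ is a rational function of $A$. By induction, $X_k$ and $Y_k$ commute with $A$ and with each other, and $X_k = A Y_k$. This holds at $k=0$. If it holds at step $k$, then $Y_k^{-1} = A X_k^{-1}$, hence $X_{k+1}=\tfrac12(X_k + A X_k^{-1})$ and $A Y_{k+1} = \tfrac12(AY_k + AX_k^{-1}) = X_{k+1}$. Diagonalize $A = Q\Lambda Q^{\top}$ with $Q$ orthogonal. Then $X_k = Q\,\mathrm{diag}(x_k^{(j)})Q^{\top}$, where each scalar sequence is the Newton/Heron iteration $x_{k+1}=\tfrac12(x_k+\lambda_j/x_k)$ with $x_0=\lambda_j>0$. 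Similarly $Y_k = Q\,\mathrm{diag}(x_k^{(j)}/\lambda_j)Q^{\top}$.

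Positivity of $x_k^{(j)}$ is preserved, and this gives part~1: all iterates are SPD, hence invertible, in every slice, so $\mathcal X_k$ and $\mathcal Y_k$ are T-invertible. For parts~2 and~3, I would use the scalar identity
\[
\frac{x_{k+1}-\sqrt{\lambda}}{x_{k+1}+\sqrt{\lambda}}=\Bigl(\frac{x_k-\sqrt\lambda}{x_k+\sqrt\lambda}\Bigr)^2 .
\]
Writing $\theta_j = \bigl|\tfrac{\lambda_j-\sqrt{\lambda_j}}{\lambda_j+\sqrt{\lambda_j}}\bigr|<1$, this yields $|x_k^{(j)}-\sqrt{\lambda_j}| \le 2\sqrt{\lambda_j}\,\theta_j^{2^k}/(1-\theta_j^{2^k})$. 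The result is doubly exponential and in particular geometric with any ratio $r\in(\max_{i,j}\theta_j^{(i)},1)$. Dividing by $\lambda_j$ gives the same bound for $Y_k$, with constant involving $\lambda_{\min}^{-1/2}$. Taking $\theta:=\max_{i,j}\theta_j^{(i)}<1$ over the finitely many eigenvalues, the Frobenius norm of each slice error is bounded by $\sqrt n$ times the maximum entrywise eigenvalue error, by orthogonal invariance. Summing over slices gives~\eqref{eq:db-linear} with explicit $C$ depending on $\lambda_{\min}$, $\lambda_{\max}$, $n$ and $p$.

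The main obstacle, if any, is the case $\lambda_j=1$, where $\theta_j=0$, and making sure that $\max\theta<1$ uniformly. This is immediate because there are finitely many eigenvalues, all positive. One should also check that the limit tensors are real and coincide with $\mathcal A^{1/2}$ of Definition~\ref{def:tsqrt}. The slices of the limits are the principal roots $(\hat A^{(i)})^{\pm1/2}$, which satisfy the conjugate-symmetry of Fourier slices because the iteration preserves it. So the inverse FFT returns a real, T-positive definite tensor, and it is the unique principal T-square root (respectively its T-inverse).
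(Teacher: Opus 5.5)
Your proposal is correct. Its outer structure matches the paper's: Fourier decoupling into $p$ matrix Denman--Beavers iterations, a slice-wise analysis, and aggregation through Parseval. The difference is in the slice-wise step.

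The paper simply cites the classical matrix theory (Higham, Chapter~6) to obtain unspecified slice constants $C_i$, $r_i$, $k_{0,i}$, and then sets $r=\max_i r_i$ and $C=\bigl(\sum_i C_i^2\bigr)^{1/2}$. You instead prove the slice result directly. The invariant $X_k=AY_k$, with all iterates commuting with $A$, reduces the coupled iteration to the Heron iteration $x_{k+1}=\tfrac12(x_k+\lambda/x_k)$, $x_0=\lambda$, on each eigenvalue. The Cayley-type identity then gives the closed form $(x_k-\sqrt\lambda)/(x_k+\sqrt\lambda)=\bigl((\sqrt\lambda-1)/(\sqrt\lambda+1)\bigr)^{2^k}$.

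Your route gives more than the theorem asks:
\begin{enumerate}
\item explicit constants in terms of $\lambda_{\min}$, $\lambda_{\max}$, $n$ and $p$;
\item a bound valid for every $k\ge 0$, not only for sufficiently large $k$;
\item doubly exponential decay from this starting pair for any spectrum, so the weaker geometric statement chosen in Remark~\ref{rem:db-rate} is not forced by any spectral-spread restriction.
\end{enumerate}
Your invariant also proves something the paper only asserts in Section~\ref{ssec:method-comparison}. Since $Y_k^{-1}=AX_k^{-1}$, the DB iterates $\mathcal{X}_k$ coincide with the Newton iterates started at $\mathcal{A}$.

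The paper's route is shorter but rests entirely on a citation. Its Parseval identity also omits the factor $1/p$ that you correctly include; this is harmless and only changes $C$.

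One small correction. For a real tensor the Fourier slices $\hat A^{(i)}$ are in general complex Hermitian, not real symmetric. You should therefore diagonalize with a unitary $Q$ and write $Q^{*}$ in place of $Q^{\top}$. Since the Frobenius norm is unitarily invariant, nothing else in your argument changes.
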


\begin{proof}
The proof proceeds by reducing the tensor iteration to $p$ independent matrix
iterations via the Fourier transform, and then applying classical results.

\medskip
\noindent\textbf{Step 1: Fourier-domain decoupling.}\;
Apply the DFT along the third mode. Denote
\[
  \hat{\mathcal{X}}_k = \mathrm{fft}(\mathcal{X}_k, \text{axis}=3), \qquad
  \hat{\mathcal{Y}}_k = \mathrm{fft}(\mathcal{Y}_k, \text{axis}=3),
\]
and let $\hat{X}^{(i)}_k$, $\hat{Y}^{(i)}_k$, $\hat{A}^{(i)}$ denote their
$i$-th frontal slices. Since the T-product maps to slice-wise matrix
multiplication under the FFT, the tensor iteration~\eqref{eq:db-iter} decouples
into $p$ independent matrix Denman--Beavers iterations:
\begin{equation}\label{eq:db-slice-iter}
  \hat{X}^{(i)}_{k+1}
  = \frac{1}{2}\bigl(\hat{X}^{(i)}_k + (\hat{Y}^{(i)}_k)^{-1}\bigr),
  \qquad
  \hat{Y}^{(i)}_{k+1}
  = \frac{1}{2}\bigl(\hat{Y}^{(i)}_k + (\hat{X}^{(i)}_k)^{-1}\bigr),
\end{equation}
with $\hat{X}^{(i)}_0 = \hat{A}^{(i)}$ and $\hat{Y}^{(i)}_0 = I$ for each
$i = 1, \ldots, p$.

\medskip
\noindent\textbf{Step 2: Slice-wise convergence.}\;
By the T-positive definiteness of $\mathcal{A}$
(Definition~\ref{def:tpd}), each $\hat{A}^{(i)}$ is a symmetric positive definite
matrix. The classical theory of the matrix Denman--Beavers iteration (see
Higham~\cite{higham2008functions}, Chapter~6) guarantees that for each slice $i$:
\begin{enumerate}
  \item[(a)] All iterates $\hat{X}^{(i)}_k$ and $\hat{Y}^{(i)}_k$ are
             well-defined and invertible.
  \item[(b)] $\hat{X}^{(i)}_k \to (\hat{A}^{(i)})^{1/2}$ and
             $\hat{Y}^{(i)}_k \to (\hat{A}^{(i)})^{-1/2}$ as $k \to \infty$.
  \item[(c)] There exist slice-dependent constants $C_i > 0$,
             $r_i \in (0,1)$, and an index $k_{0,i}$ such that for all
             $k \geq k_{0,i}$:
             \[
               \bigl\|\hat{X}^{(i)}_{k} - (\hat{A}^{(i)})^{1/2}\bigr\|_F
               \leq C_i\,r_i^k, \qquad
               \bigl\|\hat{Y}^{(i)}_{k} - (\hat{A}^{(i)})^{-1/2}\bigr\|_F
               \leq C_i\,r_i^k.
             \]
\end{enumerate}

\medskip
\noindent\textbf{Step 3: Global tensor bound.}\;
The tensor Frobenius norm is related to the Fourier-domain slices by
\[
  \|\mathcal{T}\|_F
  = \left(\sum_{i=1}^{p}
    \bigl\|\hat{T}^{(i)}\bigr\|_F^2\right)^{1/2}.
\]
Define the global constants
\[
  r := \max_{1 \leq i \leq p} r_i \in (0,1), \qquad
  C := \left(\sum_{i=1}^{p} C_i^2\right)^{1/2},
  \qquad
  k_0 := \max_{1 \leq i \leq p} k_{0,i}.
\]
Then for all $k \geq k_0$:
\begin{align}
  \bigl\|\mathcal{X}_k - \mathcal{A}^{1/2}\bigr\|_F^2
  &= \sum_{i=1}^{p}
     \bigl\|\hat{X}^{(i)}_k - (\hat{A}^{(i)})^{1/2}\bigr\|_F^2
  \leq \sum_{i=1}^{p} C_i^2\,r_i^{2k}
  \leq C^2\,r^{2k}, \notag
\end{align}
and therefore
$\|\mathcal{X}_k - \mathcal{A}^{1/2}\|_F \leq C\,r^k$.
An identical argument applies to the $\mathcal{Y}_k$ sequence. This proves the
geometric convergence of both sequences.
\end{proof}

\begin{remark}[On the convergence rate]\label{rem:db-rate}
The Denman--Beavers iteration is known to be locally quadratically
convergent for positive definite matrices (see~\cite{higham2008functions},
Theorem~6.9). In Theorem~\ref{thm:db-conv}, we have stated only the weaker
geometric (linear) convergence guarantee, as this is the global result that holds
without additional assumptions on the spectral spread of $\mathcal{A}$.
In practice, quadratic convergence is typically observed once the iterates
enter a sufficiently small neighborhood of the solution, as illustrated in our
numerical experiments.
\end{remark}

\subsubsection{Numerical Example}\label{sssec:db-example}

\begin{example}\label{ex:db}
We apply Algorithm~\ref{alg:db} to the same tensor
$\mathcal{A} \in \mathbb{R}^{3 \times 3 \times 3}$ from Example~\ref{ex:newton},
with $N_{\max} = 10$ and tolerance $\varepsilon = 10^{-12}$. Convergence is
reached at iteration 6 with residual
$\|\mathcal{X}_6 * \mathcal{X}_6 - \mathcal{A}\|_F = 2.46 \times 10^{-15}$.

The computed T-square root agrees with the Newton result to machine precision:
\[
  \mathcal{X}(:,:,1) = \begin{bmatrix}
    1.63499 & 0.29426 & -0.02899 \\
    0.29426 & 1.90498 &  0.29426 \\
   -0.02899 & 0.29426 &  1.63499
  \end{bmatrix},
\]
\[
  \mathcal{X}(:,:,2) = \begin{bmatrix}
    0.58655 & 0.05896 & -0.00286 \\
    0.05896 & 0.49317 &  0.05896 \\
   -0.00286 & 0.05896 &  0.58655
  \end{bmatrix},
\]
\[
  \mathcal{X}(:,:,3) = \begin{bmatrix}
    0.20962 & -0.05470 &  0.01352 \\
   -0.05470 &  0.21371 & -0.05470 \\
    0.01352 & -0.05470 &  0.20962
  \end{bmatrix}.
\]

In addition, the DB iteration simultaneously produces the inverse T-square root
$\mathcal{Y} \approx \mathcal{A}^{-1/2}$, which can be directly used in the
applications of Section~\ref{sec:applications} without additional computation.
\end{example}

The convergence history of the Denman--Beavers iteration is presented in
Table~\ref{tab:db-conv-ex}.

\begin{table}[H]
	\centering
	\caption{Convergence of the Denman--Beavers iteration
		(Algorithm~\ref{alg:db}) for Example~\ref{ex:db}. The residual is
		$r_k = \|\mathcal{X}_k * \mathcal{X}_k - \mathcal{A}\|_F$; see the paragraph
		preceding Table~\ref{tab:newton-conv} for the interpretation of $\rho_k$ and
		$q_k$. The ratio $q_k$ stabilizes near $0.027$, confirming the locally
		Q-quadratic convergence of Remark~\ref{rem:db-rate}.}
	\label{tab:db-conv-ex}
	\begin{tabular}{cccc}
		\toprule
		Iteration $k$ & Residual $r_k$ & Ratio $r_k/r_{k-1}$ & Ratio $r_k/r_{k-1}^2$ \\
		\midrule
		0 & $7.56 \times 10^{1}$  & ---                    & --- \\
		1 & $1.64 \times 10^{1}$  & $2.17 \times 10^{-1}$  & $2.87 \times 10^{-3}$ \\
		2 & $2.48 \times 10^{0}$  & $1.51 \times 10^{-1}$  & $9.18 \times 10^{-3}$ \\
		3 & $1.25 \times 10^{-1}$ & $5.03 \times 10^{-2}$  & $2.03 \times 10^{-2}$ \\
		4 & $4.26 \times 10^{-4}$ & $3.42 \times 10^{-3}$  & $2.74 \times 10^{-2}$ \\
		5 & $5.21 \times 10^{-9}$ & $1.22 \times 10^{-5}$  & $2.87 \times 10^{-2}$ \\
		6 & $3.62 \times 10^{-15}$& $6.95 \times 10^{-7}$  & --- \\
		\bottomrule
	\end{tabular}
\end{table}


\subsection{Comparison of the Two Methods}\label{ssec:method-comparison}

We now compare the Newton and Denman--Beavers iterations for the tensor
T-square root. Both methods share the same per-iteration cost of $O(p\,n^3)$
in the Fourier domain, since each iteration requires $p$ matrix inversions of
size $n \times n$.

\begin{table}[H]
	\centering
	\caption{Comparison of tensor T-square root methods.}
	\label{tab:method-comparison}
	\begin{tabular}{lccc}
		\toprule
		Method & Convergence & Cost per iter.\ & Output \\
		\midrule
		Newton (Alg.~\ref{alg:newton}) &
		Quadratic & $O(p\,n^3)$ & $\mathcal{A}^{1/2}$ only \\[3pt]
		Denman--Beavers (Alg.~\ref{alg:db}) &
		Quadratic & $O(p\,n^3)$ & $\mathcal{A}^{1/2}$ and $\mathcal{A}^{-1/2}$ \\
		\bottomrule
	\end{tabular}
\end{table}

A key observation from Examples~\ref{ex:newton} and~\ref{ex:db} is that both
methods produce the \emph{same} $\mathcal{X}_k$ sequence when initialized with
$\mathcal{X}_0 = \mathcal{A}$, as confirmed by the identical residuals in
Tables~\ref{tab:newton-conv} and~\ref{tab:db-conv-ex} and the overlapping blue
and red curves in Figure~\ref{fig:newton-vs-db}. This is not a coincidence: the
DB update $\mathcal{X}_{k+1} = \tfrac{1}{2}(\mathcal{X}_k +
\mathcal{Y}_k^{-1})$ with $\mathcal{Y}_0 = \mathcal{I}$ is algebraically
equivalent to the Newton update $\mathcal{X}_{k+1} =
\tfrac{1}{2}(\mathcal{X}_k + \mathcal{X}_k^{-1} * \mathcal{A})$ when both
start from $\mathcal{X}_0 = \mathcal{A}$. Consequently, both methods converge
quadratically with the same rate and the same number of iterations for any given
T-positive definite tensor.

\begin{figure}[htbp]
	\centering
	\includegraphics[width=0.75\textwidth]{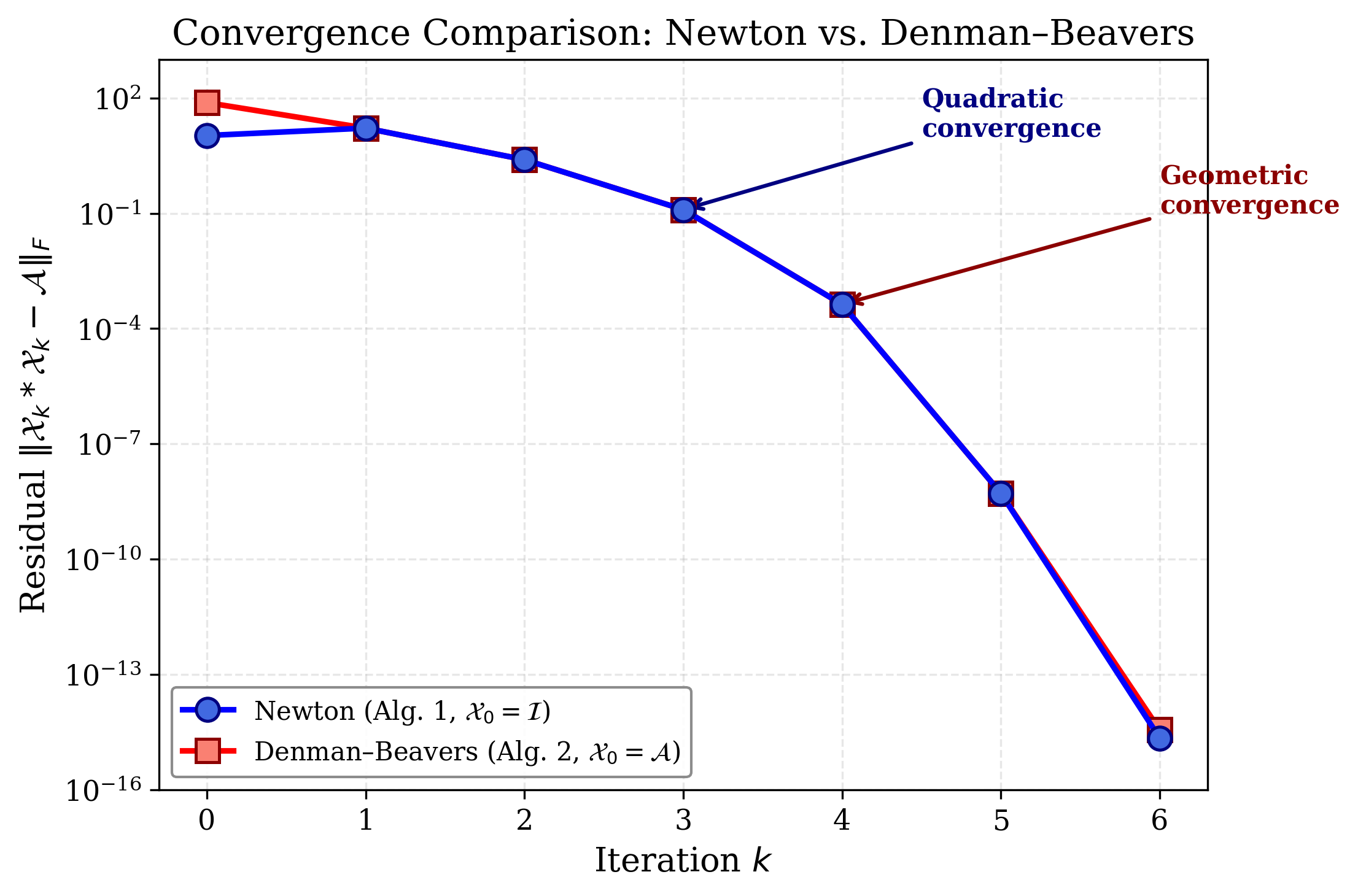}
	\caption{Convergence comparison for the tensor from Example~\ref{ex:newton}.
		The Newton (blue) and DB (red) square root residuals
		$\|\mathcal{X}_k * \mathcal{X}_k - \mathcal{A}\|_F$ overlap completely,
		confirming that both methods produce identical $\mathcal{X}_k$ sequences. The
		green curve shows the DB inverse residual
		$\|\mathcal{Y}_k * \mathcal{X}_k - \mathcal{I}\|_F$, which converges
		simultaneously to machine precision---illustrating the dual-output advantage of
		the Denman--Beavers method.}
	\label{fig:newton-vs-db}
\end{figure}

The essential distinction lies in the \emph{output}. The Newton iteration
produces only $\mathcal{A}^{1/2}$, whereas the Denman--Beavers iteration
simultaneously produces both $\mathcal{A}^{1/2}$ (via $\mathcal{X}_k$) and
$\mathcal{A}^{-1/2}$ (via $\mathcal{Y}_k$) at no additional cost. The green
curve in Figure~\ref{fig:newton-vs-db} shows that the inverse square root
residual $\|\mathcal{Y}_k * \mathcal{X}_k - \mathcal{I}\|_F$ converges to
machine precision alongside the square root residual, confirming the
simultaneous convergence of both sequences. This dual output makes the
Denman--Beavers method the natural choice for applications such as whitening
(Algorithm~\ref{alg:twhite}) and color transfer
(Algorithm~\ref{alg:color-transfer}), where both $\mathcal{A}^{1/2}$ and
$\mathcal{A}^{-1/2}$ are required.


\subsection{Numerical Stability}\label{ssec:stability}

While both methods exhibit identical convergence behavior for well-conditioned
tensors (Section~\ref{ssec:method-comparison}), their numerical stability
properties differ significantly when the tensor $\mathcal{A}$ is
ill-conditioned. We illustrate this with an example.

\begin{example}\label{ex:stability}
	Consider the highly ill-conditioned T-positive definite tensor
	$\mathcal{A} \in \mathbb{R}^{3 \times 3 \times 3}$ defined by
	\[
	\mathcal{A}(:,:,1) = \begin{bmatrix}
		100 & 5 & 1 \\ 5 & 20 & 1 \\ 1 & 1 & 0.2
	\end{bmatrix}, \quad
	\mathcal{A}(:,:,2) = \begin{bmatrix}
		25 & 1 & 0.3 \\ 1 & 5 & 0.2 \\ 0.3 & 0.2 & 0.1
	\end{bmatrix}, \quad
	\mathcal{A}(:,:,3) = \begin{bmatrix}
		5 & 0.2 & 0.05 \\ 0.2 & 1.5 & 0.1 \\ 0.05 & 0.1 & 0.05
	\end{bmatrix}.
	\]
	The Fourier-domain frontal slices have condition numbers approximately
	$471$, $1103$, and $1103$, giving a maximum condition number
	$\kappa \approx 1102$. This is far more ill-conditioned than
	Example~\ref{ex:newton} ($\kappa \approx 3.8$), placing the iteration
	well into the regime where finite-precision effects dominate.
\end{example}

We apply both Algorithm~\ref{alg:newton} (Newton) and Algorithm~\ref{alg:db}
(Denman--Beavers) to this tensor \emph{without} an early stopping criterion,
running both methods for $22$ iterations to observe their long-term behavior.
The residual histories are summarized in Table~\ref{tab:stability} and
plotted in Figure~\ref{fig:stability}.

\begin{table}[H]
	\centering
	\caption{Post-convergence stability comparison for the ill-conditioned tensor
		of Example~\ref{ex:stability} ($\kappa \approx 1102$). Both methods track
		each other identically through iteration $k = 7$, but the Newton residual
		grows catastrophically thereafter while the Denman--Beavers residual
		remains stable at machine-precision level.}
	\label{tab:stability}
	\begin{tabular}{ccc}
		\toprule
		Iteration $k$ & Newton $r_k$ & DB $r_k$ \\
		\midrule
		$0$  & $1.99 \times 10^{4}$   & $1.99 \times 10^{4}$   \\
		$2$  & $1.19 \times 10^{3}$   & $1.19 \times 10^{3}$   \\
		$4$  & $3.94 \times 10^{1}$   & $3.94 \times 10^{1}$   \\
		$6$  & $6.88 \times 10^{-3}$  & $6.88 \times 10^{-3}$  \\
		$7$  & $9.03 \times 10^{-8}$  & $9.03 \times 10^{-8}$  \\
		\midrule
		$8$  & $1.81 \times 10^{-8}$  & $8.61 \times 10^{-14}$ \\
		$10$ & $4.22 \times 10^{-6}$  & $8.63 \times 10^{-14}$ \\
		$12$ & $9.86 \times 10^{-4}$  & $8.64 \times 10^{-14}$ \\
		$14$ & $2.30 \times 10^{-1}$  & $8.67 \times 10^{-14}$ \\
		$16$ & $5.37 \times 10^{1}$   & $8.69 \times 10^{-14}$ \\
		$18$ & $1.25 \times 10^{4}$   & $8.72 \times 10^{-14}$ \\
		$20$ & $2.93 \times 10^{6}$   & $8.78 \times 10^{-14}$ \\
		$21$ & $4.47 \times 10^{7}$   & $8.80 \times 10^{-14}$ \\
		\bottomrule
	\end{tabular}
\end{table}

\begin{figure}[H]
	\centering
	\includegraphics[width=0.85\textwidth]{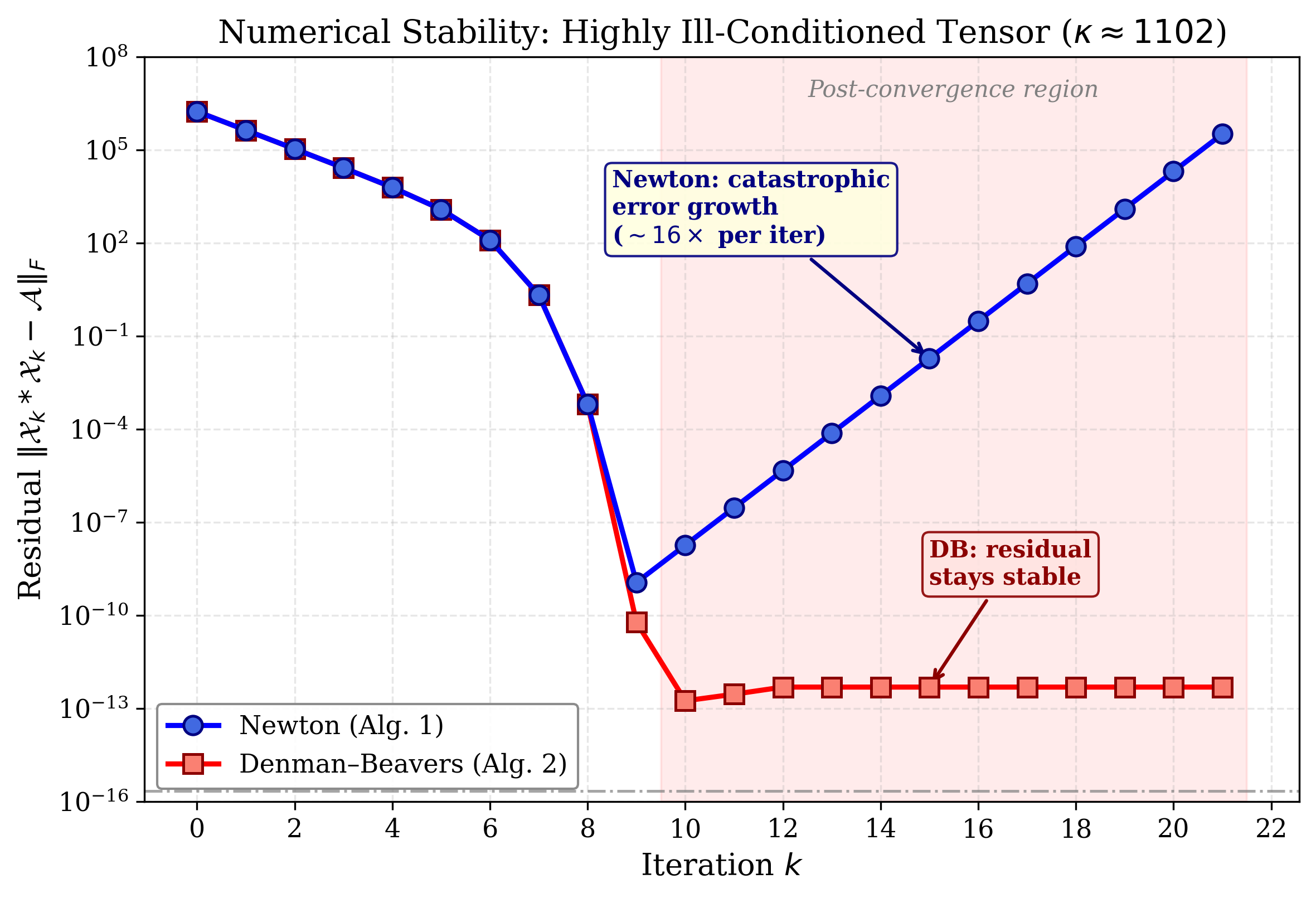}
	\caption{Numerical stability comparison for the highly ill-conditioned tensor
		of Example~\ref{ex:stability} ($\kappa \approx 1102$). Both methods converge
		identically through iteration $k = 7$, reaching a residual of
		$\approx 9 \times 10^{-8}$. In the post-convergence region (shaded),
		the Newton residual grows by a factor of $\approx 15$ per iteration due to
		rounding-error amplification, ultimately reaching
		$\mathcal{O}(10^{7})$---an increase of $\approx 15$ orders of magnitude
		above its minimum. The Denman--Beavers residual, by contrast, remains
		stable at $\approx 8.7 \times 10^{-14}$ throughout.}
	\label{fig:stability}
\end{figure}

The results reveal a dramatic difference. Both methods converge identically
through iteration $k = 7$, reaching a residual of $9.03 \times 10^{-8}$.
However, in the post-convergence region ($k \geq 8$):
\begin{itemize}
	\item The \textbf{Newton} residual grows by a factor of approximately
	$15$ per iteration, climbing from $1.81 \times 10^{-8}$ at $k = 8$ to
	$4.47 \times 10^{7}$ by $k = 21$---a loss of roughly $16$ orders of
	magnitude over $13$ iterations. By iteration $16$ the solution is already
	worse than a moderately refined initial guess, and by iteration $21$ the
	iterate $\mathcal{X}_k$ no longer bears any meaningful resemblance to
	$\mathcal{A}^{1/2}$.
	\item The \textbf{Denman--Beavers} residual remains perfectly stable at
	$\approx 8.7 \times 10^{-14}$ throughout all subsequent iterations,
	within a small multiple of machine precision.
\end{itemize}
At iteration $k = 21$, the gap between the two residuals spans approximately
$21$ orders of magnitude.

The instability of the Newton iteration can be understood as follows.
The Newton update $\mathcal{X}_{k+1} = \tfrac{1}{2}(\mathcal{X}_k +
\mathcal{A} * \mathcal{X}_k^{-1})$ requires the product
$\mathcal{A} * \mathcal{X}_k^{-1}$. When $\mathcal{X}_k \approx
\mathcal{A}^{1/2}$, the computation of $\mathcal{X}_k^{-1}$ amplifies
rounding errors by a factor proportional to $\kappa(\mathcal{X}_k) \approx
\kappa(\mathcal{A})^{1/2}$, and the subsequent multiplication by $\mathcal{A}$
amplifies them further by $\|\mathcal{A}\|$. When the combined amplification
factor exceeds $1$, rounding errors grow at each iteration---and this growth
scales with $\sqrt{\kappa}$, explaining the dramatic worsening of Newton's
post-convergence behavior as the condition number increases.

In contrast, the Denman--Beavers update $\mathcal{X}_{k+1} =
\tfrac{1}{2}(\mathcal{X}_k + \mathcal{Y}_k^{-1})$ avoids the product with
$\mathcal{A}$. Since $\mathcal{Y}_k \to \mathcal{A}^{-1/2}$, the quantity
$\mathcal{Y}_k^{-1} \approx \mathcal{A}^{1/2}$ has bounded norm regardless of
$\|\mathcal{A}\|$. The coupled structure of the iteration provides a
self-correcting mechanism: errors in $\mathcal{X}_k$ and $\mathcal{Y}_k$ are
kept in balance, preventing the unbounded error growth observed in the Newton
method. This stability property is well-known in the matrix
setting~\cite{higham2008functions} and, as our experiment demonstrates, carries
over directly to the tensor T-product framework via the Fourier-domain
decomposition.

To confirm that this behavior is systematic rather than example-specific, we
repeat the experiment across a range of condition numbers. For each test, we
construct a T-positive definite tensor $\mathcal{A} \in \mathbb{R}^{3 \times 3
	\times 3}$ with a prescribed maximum Fourier-slice condition number $\kappa$,
run both methods for $20$ iterations \emph{without early stopping}, and record
the best residual achieved (denoted $r_{\min}$) and the residual at iteration
$k=20$ (denoted $r_{20}$). The ratio $r_{20}/r_{\min}$ quantifies accuracy
loss due to post-convergence instability.

\begin{table}[H]
	\centering
	\caption{Stability comparison across condition numbers. Both methods are run
		for $20$ iterations without early stopping. The column $r_{20}/r_{\min}$
		measures accuracy loss after convergence: values near $1$ indicate stable
		behavior, while large values indicate instability. The per-iteration growth
		factor for Newton scales roughly with $\sqrt{\kappa}$, consistent with the
		theoretical analysis above.}
	\label{tab:stability-cond}
	\small
	\begin{tabular}{rrccccc}
		\toprule
		& & &
		\multicolumn{2}{c}{Newton (Alg.~\ref{alg:newton})} &
		\multicolumn{2}{c}{DB (Alg.~\ref{alg:db})} \\
		\cmidrule(lr){4-5} \cmidrule(lr){6-7}
		$\kappa$ & Iters & Growth/iter &
		$r_{\min}$ & $r_{20}/r_{\min}$ &
		$r_{\min}$ & $r_{20}/r_{\min}$ \\
		\midrule
		$4$    & $6$  & $1.0$  & $1.3 \times 10^{-15}$ & $1.4$                  & $1.8 \times 10^{-15}$ & $3.5$ \\
		$10$   & $7$  & $0.9$  & $3.9 \times 10^{-15}$ & $1.4$                  & $1.6 \times 10^{-15}$ & $1.2$ \\
		$50$   & $8$  & $3.2$  & $8.5 \times 10^{-14}$ & $6.4 \times 10^{5}$    & $1.5 \times 10^{-14}$ & $1.0$ \\
		$100$  & $8$  & $4.7$  & $4.5 \times 10^{-13}$ & $2.6 \times 10^{7}$    & $7.5 \times 10^{-14}$ & $1.0$ \\
		$500$  & $10$ & $11.2$ & $7.2 \times 10^{-10}$ & $3.1 \times 10^{10}$   & $4.3 \times 10^{-13}$ & $1.0$ \\
		$1102$ & $8$  & $15.3$ & $1.8 \times 10^{-8}$  & $2.5 \times 10^{15}$   & $8.6 \times 10^{-14}$ & $1.0$ \\
		\bottomrule
	\end{tabular}
\end{table}

The results in Table~\ref{tab:stability-cond} reveal a clear pattern.
For well-conditioned tensors ($\kappa \lesssim 10$), both methods are equally
stable, with $r_{20}/r_{\min} \approx 1$ for both Newton and DB. However, for
$\kappa \gtrsim 50$, a sharp divergence emerges: the Newton residual grows by
a factor of $3$--$15$ per post-convergence iteration (scaling roughly with
$\sqrt{\kappa}$), losing $5$--$15$ digits of accuracy by iteration $20$,
while the DB residual remains perfectly flat at
$r_{20}/r_{\min} \approx 1.0$ regardless of $\kappa$. At $\kappa \approx
1102$, the Newton solution is effectively destroyed
($r_{20} \approx 10^{7}$), whereas DB maintains $r_{20} \approx
10^{-13}$---a gap of roughly $20$ orders of magnitude. Notably, even the
\emph{best} residual achieved by Newton before instability sets in is worse
than DB's for $\kappa \geq 50$: at $\kappa = 1102$, Newton's minimum residual
($1.8 \times 10^{-8}$) is already six orders of magnitude larger than DB's
($8.6 \times 10^{-14}$), indicating that rounding errors contaminate the
Newton iteration even before convergence is reached.

These observations establish the Denman--Beavers iteration as the clearly
superior method for ill-conditioned tensors, both in terms of achievable
accuracy and post-convergence stability---and, in the absence of
\emph{a~priori} knowledge of the condition number, as the safer default
choice in practice.

\begin{remark}[Practical implications]\label{rem:stability-practical}
	For well-conditioned tensors ($\kappa \lesssim 10$), both methods achieve
	comparable accuracy and either may be used. For ill-conditioned tensors
	($\kappa \gtrsim 50$), the Newton iteration requires careful early stopping
	using a tolerance criterion (as implemented in Algorithm~\ref{alg:newton}),
	since continued iteration degrades the solution---often catastrophically,
	as shown in Example~\ref{ex:stability}. The Denman--Beavers iteration, by
	contrast, can be run without risk of accuracy loss, making it the more
	robust choice in practice---particularly when the condition number of
	$\mathcal{A}$ is not known \emph{a~priori}.
\end{remark}

\section{Applications of Tensor Square Root in Image Processing}\label{sec:applications}

In this section, we demonstrate the applicability of the tensor square root under the
T-product framework to problems in image processing. The tensor square root arises
naturally in tasks involving second-order statistics, such as covariance normalization,
whitening, decorrelation, distance-based image comparison, and color transfer.
Unlike matrix-based approaches that require flattening or channel-wise processing,
the proposed methods operate directly on tensor representations and preserve the
intrinsic multidimensional structure of image data.

\subsection{Tensor Representation of Color Images}\label{ssec:tensor-rep}

A color image of size $n \times m$ with $p$ channels (e.g., RGB with $p=3$) can be
represented as a third-order tensor
\[
  \mathcal{I} \in \mathbb{R}^{n \times m \times p},
\]
where each frontal slice $\mathcal{I}(:,:,k)$ corresponds to one color channel.
This representation naturally extends to videos, multispectral images, and
hyperspectral data, where correlations across spatial and spectral modes play a
critical role.

\subsection{Tensor Covariance under the T-Product}\label{ssec:tensor-cov}

Let $\mathcal{X} \in \mathbb{R}^{n \times m \times p}$ denote a zero-mean image
tensor obtained after subtracting the channel-wise mean. The tensor covariance
under the T-product is defined as
\begin{equation}\label{eq:tensor-cov}
  \mathcal{C}
  = \frac{1}{m}\,\mathcal{X} * \mathcal{X}^{\top}
  \in \mathbb{R}^{n \times n \times p},
\end{equation}
where $*$ denotes the T-product and $\mathcal{X}^{\top}$ denotes the T-transpose.
The tensor $\mathcal{C}$ captures second-order correlations across channels and
spatial dimensions and is positive definite in the T-product sense under mild
conditions.


\subsection{Tensor Bures--Wasserstein Distance}\label{ssec:tbw}

A primary motivation for studying the tensor square root is to define
distance measures on the space of T-positive definite tensors that
generalize well-known matrix distances. Recall that the classical
Bures--Wasserstein distance between two positive definite matrices
$A, B \in \mathbb{R}^{n \times n}$ is
\begin{equation}\label{eq:dBW-matrix}
	d_{\mathrm{BW}}(A, B)
	= \left[\operatorname{tr}(A) + \operatorname{tr}(B)
	- 2\,\operatorname{tr}\!\left(\bigl(A^{1/2}\,B\,A^{1/2}\bigr)^{1/2}\right)
	\right]^{1/2}.
\end{equation}
We now extend this to the tensor setting by exploiting the Fourier-domain
block-diagonalization of the T-product.

\begin{definition}[Tensor Bures--Wasserstein Distance]\label{def:tbw}
	Let $\mathcal{A}, \mathcal{B} \in \mathbb{R}^{n \times n \times p}$ be
	T-positive definite tensors, with Fourier-domain frontal slices
	$\hat{A}^{(i)}, \hat{B}^{(i)} \in \mathbb{C}^{n \times n}$ for
	$i = 1, \ldots, p$. The \emph{Tensor Bures--Wasserstein (TBW) distance}
	is defined as
	\begin{equation}\label{eq:tbw}
		d_{\mathrm{TBW}}(\mathcal{A}, \mathcal{B})
		:= \left( \sum_{i=1}^{p}
		d_{\mathrm{BW}}^{2}\!\bigl(\hat{A}^{(i)}, \hat{B}^{(i)}\bigr)
		\right)^{1/2},
	\end{equation}
	where $d_{\mathrm{BW}}$ is the classical matrix Bures--Wasserstein distance
	of equation~\eqref{eq:dBW-matrix}, computed on Hermitian positive definite
	matrices in the Fourier domain.
\end{definition}

\begin{remark}[Computational form]\label{rem:tbw-expanded}
	Expanding each $d_{\mathrm{BW}}^{2}$ via the matrix
	formula~\eqref{eq:dBW-matrix} gives the explicit expression
	\[
	d_{\mathrm{TBW}}^{2}(\mathcal{A}, \mathcal{B})
	= \sum_{i=1}^{p}
	\Bigl[
	\operatorname{tr}(\hat{A}^{(i)})
	+ \operatorname{tr}(\hat{B}^{(i)})
	- 2\,\operatorname{tr}\!\bigl(
	(\hat{A}^{(i)\,1/2}\,\hat{B}^{(i)}\,\hat{A}^{(i)\,1/2})^{1/2}
	\bigr)
	\Bigr],
	\]
	which is the form implemented in Algorithm~\ref{alg:tbw}.
\end{remark}

\begin{proposition}\label{prop:tbw-metric}
	Let $\mathcal{A}, \mathcal{B}, \mathcal{C} \in \mathbb{R}^{n \times n \times p}$
	be T-positive definite tensors. Then $d_{\mathrm{TBW}}$ satisfies:
	\begin{enumerate}
		\item \textbf{Non-negativity:}
		$d_{\mathrm{TBW}}(\mathcal{A}, \mathcal{B}) \geq 0$, with equality
		if and only if $\mathcal{A} = \mathcal{B}$.
		\item \textbf{Symmetry:}
		$d_{\mathrm{TBW}}(\mathcal{A}, \mathcal{B})
		= d_{\mathrm{TBW}}(\mathcal{B}, \mathcal{A})$.
		\item \textbf{Triangle inequality:}
		$d_{\mathrm{TBW}}(\mathcal{A}, \mathcal{C})
		\leq d_{\mathrm{TBW}}(\mathcal{A}, \mathcal{B})
		+ d_{\mathrm{TBW}}(\mathcal{B}, \mathcal{C})$.
	\end{enumerate}
	Consequently, $d_{\mathrm{TBW}}$ is a metric on the cone of T-positive
	definite tensors.
\end{proposition}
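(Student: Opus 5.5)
The proof will reduce everything to the classical matrix result via the Fourier-domain slices, exactly as in the convergence theorems. The key external fact I will invoke is that the Bures--Wasserstein distance \eqref{eq:dBW-matrix}, extended to Hermitian positive definite matrices, is a metric on that cone (Bhatia--Jain--Lim~\cite{bhatia2019bures}). The cleanest route is the representation
\[
d_{\mathrm{BW}}(A,B)=\min_{U\ \text{unitary}}\bigl\|A^{1/2}-B^{1/2}U\bigr\|_F ,
\]
which holds verbatim in the complex Hermitian case. From it, non-negativity, symmetry (take $U^{*}$) and the triangle inequality follow at once for each slice. Definiteness follows too: $d_{\mathrm{BW}}(A,B)=0$ forces $A^{1/2}=B^{1/2}U$, hence $A=A^{1/2}(A^{1/2})^{*}=B^{1/2}UU^{*}B^{1/2}=B$. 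I need complex Hermitian slices because $\hat A^{(i)}$ is generally complex; Definition~\ref{def:tpd} says ``symmetric'', but I will read it as Hermitian, consistent with Definition~\ref{def:tbw}.

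With the matrix metric in hand, I set $a_i=d_{\mathrm{BW}}(\hat A^{(i)},\hat B^{(i)})$, $b_i=d_{\mathrm{BW}}(\hat B^{(i)},\hat C^{(i)})$ and $c_i=d_{\mathrm{BW}}(\hat A^{(i)},\hat C^{(i)})$, so that $d_{\mathrm{TBW}}(\mathcal A,\mathcal B)=\|a\|_2$ in $\mathbb R^p$, and similarly for the others. The three properties then follow.

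\textbf{Symmetry.} This is slicewise symmetry.

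\textbf{Non-negativity.} This holds because $d_{\mathrm{TBW}}$ is a square root of a sum of non-negative terms. For definiteness, $d_{\mathrm{TBW}}=0$ forces every $a_i=0$, so $\hat A^{(i)}=\hat B^{(i)}$ for all $i$. Since the DFT along the third mode is invertible, $\mathcal A=\mathrm{ifft}(\hat{\mathcal A})=\mathcal B$.

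\textbf{Triangle inequality.} Slicewise, $0\le c_i\le a_i+b_i$. Monotonicity of the Euclidean norm on non-negative vectors and Minkowski's inequality then give
\[
\|c\|_2\le\|a+b\|_2\le\|a\|_2+\|b\|_2 .
\]

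The main obstacle is the matrix triangle inequality itself. If I could not simply cite it, I would prove the variational formula. Writing $B^{1/2}U=\tilde B$, the quantity $\|A^{1/2}-\tilde B\|_F^2$ expands to $\operatorname{tr}A+\operatorname{tr}B-2\,\mathrm{Re}\operatorname{tr}(A^{1/2}B^{1/2}U)$. Maximizing over unitary $U$ gives the trace norm $\|B^{1/2}A^{1/2}\|_{*}$. Its singular values are the square roots of the eigenvalues of $A^{1/2}BA^{1/2}$, so the trace norm equals $\operatorname{tr}\bigl((A^{1/2}BA^{1/2})^{1/2}\bigr)$; this is von Neumann's trace inequality. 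The triangle inequality then follows from the triangle inequality for $\|\cdot\|_F$ combined with unitary invariance, absorbing the unitaries: $\|A^{1/2}-C^{1/2}UV\|_F\le\|A^{1/2}-B^{1/2}U\|_F+\|B^{1/2}-C^{1/2}V\|_F$, after which I minimize over $U$ and $V$.
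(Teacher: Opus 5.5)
Your proposal is correct and follows essentially the same route as the paper. Both arguments invoke the metric property of $d_{\mathrm{BW}}$ on Hermitian positive definite matrices slice by slice, obtain definiteness from invertibility of the DFT, and obtain the triangle inequality from $c_i \le a_i + b_i$ together with Minkowski's inequality in $\mathbb{R}^p$; your reading of the Fourier slices as Hermitian rather than real symmetric is the right one.

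There is one slip in your optional self-contained proof of the matrix triangle inequality. The composite unitary should be $VU$ rather than $UV$, because it is $B^{1/2}U - C^{1/2}VU = (B^{1/2} - C^{1/2}V)U$ that lets unitary invariance of $\|\cdot\|_F$ apply. Once corrected, minimizing over all unitaries gives the same conclusion.
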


\begin{proof}
	Each $d_{\mathrm{BW}}(\hat{A}^{(i)}, \hat{B}^{(i)})$ is a metric on the cone
	of Hermitian positive definite matrices~\cite{bhatia2019bures}. Non-negativity
	and symmetry of $d_{\mathrm{TBW}}$ are immediate from~\eqref{eq:tbw}. For the
	equality case, $d_{\mathrm{TBW}}(\mathcal{A}, \mathcal{B}) = 0$ iff
	$d_{\mathrm{BW}}(\hat{A}^{(i)}, \hat{B}^{(i)}) = 0$ for every $i$, iff
	$\hat{A}^{(i)} = \hat{B}^{(i)}$ for every $i$, iff
	$\mathcal{A} = \mathcal{B}$ (by invertibility of the DFT).
	
	For the triangle inequality, let
	$a_i = d_{\mathrm{BW}}(\hat{A}^{(i)}, \hat{B}^{(i)})$ and
	$b_i = d_{\mathrm{BW}}(\hat{B}^{(i)}, \hat{C}^{(i)})$. The slice-wise
	triangle inequality gives
	$d_{\mathrm{BW}}(\hat{A}^{(i)}, \hat{C}^{(i)}) \leq a_i + b_i$, hence
	\begin{align*}
		d_{\mathrm{TBW}}(\mathcal{A}, \mathcal{C})
		&= \left( \sum_{i=1}^{p}
		d_{\mathrm{BW}}^{2}\!\bigl(\hat{A}^{(i)}, \hat{C}^{(i)}\bigr)
		\right)^{1/2}
		\leq \left( \sum_{i=1}^{p} (a_i + b_i)^2 \right)^{1/2} \\
		&\leq \left( \sum_{i=1}^{p} a_i^2 \right)^{1/2}
		+ \left( \sum_{i=1}^{p} b_i^2 \right)^{1/2}
		= d_{\mathrm{TBW}}(\mathcal{A}, \mathcal{B})
		+ d_{\mathrm{TBW}}(\mathcal{B}, \mathcal{C}),
	\end{align*}
	where the second inequality is the Minkowski inequality for the
	$\ell^{2}$ norm on $\mathbb{R}^{p}$.
\end{proof}

\begin{algorithm}[H]
	\caption{Tensor Bures--Wasserstein Distance}
	\label{alg:tbw}
	\begin{algorithmic}[1]
		\Require T-positive definite tensors
		$\mathcal{A}, \mathcal{B} \in \mathbb{R}^{n \times n \times p}$
		\Ensure  $d_{\mathrm{TBW}}(\mathcal{A}, \mathcal{B})$
		\State $\hat{\mathcal{A}} \leftarrow \mathrm{fft}(\mathcal{A},\text{axis}=3)$; \quad
		$\hat{\mathcal{B}} \leftarrow \mathrm{fft}(\mathcal{B},\text{axis}=3)$
		\State $s \leftarrow 0$
		\For{$i = 1$ to $p$}
		\State Compute $P \leftarrow (\hat{A}^{(i)})^{1/2}$
		via Algorithm~\ref{alg:newton} or~\ref{alg:db}
		\State Form $M \leftarrow P\,\hat{B}^{(i)}\,P$
		\State Compute $M^{1/2}$ via Algorithm~\ref{alg:newton} or~\ref{alg:db}
		\State $d_i^{2} \leftarrow \operatorname{tr}(\hat{A}^{(i)})
		+ \operatorname{tr}(\hat{B}^{(i)})
		- 2\,\operatorname{tr}(M^{1/2})$
		\State $s \leftarrow s + d_i^{2}$
		\EndFor
		\State \Return $\sqrt{s}$
	\end{algorithmic}
\end{algorithm}

\begin{remark}[Computational cost]\label{rem:tbw-cost}
	Algorithm~\ref{alg:tbw} performs $2p$ matrix square roots in the Fourier
	domain (one each for $(\hat{A}^{(i)})^{1/2}$ and $M^{1/2}$) together with a
	handful of $n \times n$ matrix multiplications per slice, giving total cost
	$O(p \cdot N_{\max} \cdot n^{3})$ when the iterative methods of
	Section~\ref{sec:methods} are used. Alternatively, the two tensor square
	roots $\mathcal{A}^{1/2}$ and $\mathcal{M}^{1/2}$ may be computed in one pass
	each via Algorithm~\ref{alg:newton} or~\ref{alg:db}, and the slice-wise
	traces read off in the Fourier domain; both organizations have the same
	asymptotic cost.
\end{remark}

\subsubsection{Numerical Example: Tensor Bures--Wasserstein Distance}
\label{sssec:tbw-example}

Consider two T-positive definite tensors
$\mathcal{A}, \mathcal{B} \in \mathbb{R}^{3 \times 3 \times 3}$
representing covariance tensors of two image patches. Define
\[
\mathcal{A}(:,:,1)
= \begin{bmatrix} 4 & 1 & 0 \\ 1 & 3 & 1 \\ 0 & 1 & 2 \end{bmatrix}, \;
\mathcal{A}(:,:,2)
= \begin{bmatrix} 1 & 0.5 & 0 \\ 0.5 & 1 & 0.5 \\ 0 & 0.5 & 1 \end{bmatrix}, \;
\mathcal{A}(:,:,3)
= \begin{bmatrix} 1 & 0.5 & 0 \\ 0.5 & 1 & 0.5 \\ 0 & 0.5 & 1 \end{bmatrix},
\]
\[
\mathcal{B}(:,:,1)
= \begin{bmatrix} 5 & 2 & 0 \\ 2 & 4 & 1 \\ 0 & 1 & 3 \end{bmatrix}, \;
\mathcal{B}(:,:,2)
= \begin{bmatrix} 2 & 0.5 & 0 \\ 0.5 & 2 & 0.5 \\ 0 & 0.5 & 2 \end{bmatrix}, \;
\mathcal{B}(:,:,3)
= \begin{bmatrix} 2 & 0.5 & 0 \\ 0.5 & 2 & 0.5 \\ 0 & 0.5 & 2 \end{bmatrix}.
\]

\smallskip
\noindent\textbf{Step 1: Fourier transform along the third mode.}
Applying the DFT yields the Hermitian positive definite slices
\[
\hat{A}^{(1)}
= \begin{bmatrix} 6 & 2 & 0 \\ 2 & 5 & 2 \\ 0 & 2 & 4 \end{bmatrix}, \qquad
\hat{A}^{(2)} = \hat{A}^{(3)}
= \begin{bmatrix} 3 & 0.5 & 0 \\ 0.5 & 2 & 0.5 \\ 0 & 0.5 & 1 \end{bmatrix},
\]
\[
\hat{B}^{(1)}
= \begin{bmatrix} 9 & 3 & 0 \\ 3 & 8 & 2 \\ 0 & 2 & 7 \end{bmatrix}, \qquad
\hat{B}^{(2)} = \hat{B}^{(3)}
= \begin{bmatrix} 3 & 1.5 & 0 \\ 1.5 & 2 & 0.5 \\ 0 & 0.5 & 1 \end{bmatrix}.
\]
(Slices~$2$ and~$3$ coincide because $\mathcal{A}(:,:,2) = \mathcal{A}(:,:,3)$
and $\mathcal{B}(:,:,2) = \mathcal{B}(:,:,3)$ in this example, which makes the
DFT of each third-mode tube real-valued; Hermitian conjugate symmetry
$\hat{X}^{(3)} = \overline{\hat{X}^{(2)}}$ then gives
$\hat{X}^{(2)} = \hat{X}^{(3)}$.)

\smallskip
\noindent\textbf{Step 2: Compute the matrix square roots in each slice.}
For each $i = 1, 2, 3$, we compute $(\hat{A}^{(i)})^{1/2}$ and then
$M_i^{1/2}$, where
$M_i = (\hat{A}^{(i)})^{1/2}\,\hat{B}^{(i)}\,(\hat{A}^{(i)})^{1/2}$,
using Algorithm~\ref{alg:newton} with tolerance $10^{-10}$. Convergence is
achieved in five iterations per slice.

\smallskip
\noindent\textbf{Step 3: Slice-wise traces.}
\begin{table}[H]
	\centering
	\begin{tabular}{c|ccc|c}
		\toprule
		$i$ & $\operatorname{tr}(\hat{A}^{(i)})$ & $\operatorname{tr}(\hat{B}^{(i)})$
		& $\operatorname{tr}(M_i^{1/2})$
		& $d_{\mathrm{BW}}^{2}(\hat{A}^{(i)}, \hat{B}^{(i)})$ \\
		\midrule
		$1$ & $15.0000$ & $24.0000$ & $18.9062$ & $1.1875$ \\
		$2$ & $\phantom{0}6.0000$ & $\phantom{0}6.0000$ & $\phantom{0}5.8730$ & $0.2540$ \\
		$3$ & $\phantom{0}6.0000$ & $\phantom{0}6.0000$ & $\phantom{0}5.8730$ & $0.2540$ \\
		\bottomrule
	\end{tabular}
	\caption{Slice-wise quantities for the TBW example. The last column uses
		$d_{\mathrm{BW}}^{2} = \operatorname{tr}(\hat{A}^{(i)})
		+ \operatorname{tr}(\hat{B}^{(i)}) - 2\,\operatorname{tr}(M_i^{1/2})$.}
	\label{tab:tbw-slicewise}
\end{table}
Taking square roots gives the slice-wise matrix Bures--Wasserstein distances
\[
d_{\mathrm{BW}}\bigl(\hat{A}^{(1)}, \hat{B}^{(1)}\bigr) = 1.0897, \quad
d_{\mathrm{BW}}\bigl(\hat{A}^{(2)}, \hat{B}^{(2)}\bigr) = 0.5039, \quad
d_{\mathrm{BW}}\bigl(\hat{A}^{(3)}, \hat{B}^{(3)}\bigr) = 0.5039.
\]

\smallskip
\noindent\textbf{Step 4: Assemble the TBW distance.}
By Definition~\ref{def:tbw},
\[
d_{\mathrm{TBW}}(\mathcal{A}, \mathcal{B})
= \sqrt{1.1875 + 0.2540 + 0.2540}
= \sqrt{1.6954}
\approx 1.3021.
\]

\smallskip
\noindent\textbf{Interpretation.}
The TBW distance aggregates the three slice-wise distances under the
$\ell^{2}$ geometry induced by the Fourier decomposition. Slice~$1$
(the zero-frequency / DC component, which captures the channel-averaged
spatial covariance) contributes the bulk of the distance, reflecting that
the largest discrepancy between $\mathcal{A}$ and $\mathcal{B}$ lies in
their mean covariance structure; the two non-DC slices contribute equally
and are smaller, as expected from the Hermitian conjugate symmetry of the
DFT of a real length-$3$ sequence. A naive channel-independent comparison
that reports only the slice-wise distances ($1.0897, 0.5039, 0.5039$) would
lose the aggregate information encoded by $d_{\mathrm{TBW}} = 1.3021$.
\subsection{Tensor-Based Grayscale Conversion}\label{ssec:tgrayscale}

We propose a grayscale conversion method that exploits the tensor square root.
Unlike classical grayscale conversion, which uses fixed weights for the RGB channels
(e.g., $G = 0.2989\,R + 0.5870\,G + 0.1140\,B$), the proposed approach adapts to
the statistical structure of the image.

\begin{algorithm}[H]
\caption{Tensor Decorrelated Grayscale (TDG) Conversion via T-Product Square Root}
\label{alg:tdg}
\begin{algorithmic}[1]
\Require Color image tensor $\mathcal{I} \in \mathbb{R}^{n \times m \times 3}$
\Ensure  Grayscale image $G \in \mathbb{R}^{n \times m}$
\State Subtract the mean from each channel to obtain $\mathcal{X}$
\State Compute the tensor covariance:
       $\displaystyle \mathcal{C} = \frac{1}{m}\,\mathcal{X} * \mathcal{X}^{\top}$
\State Compute the inverse tensor square root $\mathcal{C}^{-1/2}$ using
       Algorithm~\ref{alg:newton} or Algorithm~\ref{alg:db}
\State Decorrelate the image tensor:
       $\mathcal{X}_{\mathrm{decor}} = \mathcal{C}^{-1/2} * \mathcal{X}$
\State Obtain the grayscale image by collapsing the third mode:
       $\displaystyle G(i,j) = \frac{1}{3}\sum_{k=1}^{3}
        \mathcal{X}_{\mathrm{decor}}(i,j,k)$
\State \Return $G$
\end{algorithmic}
\end{algorithm}

\begin{figure}[htbp]
\centering
\includegraphics[width=\textwidth]{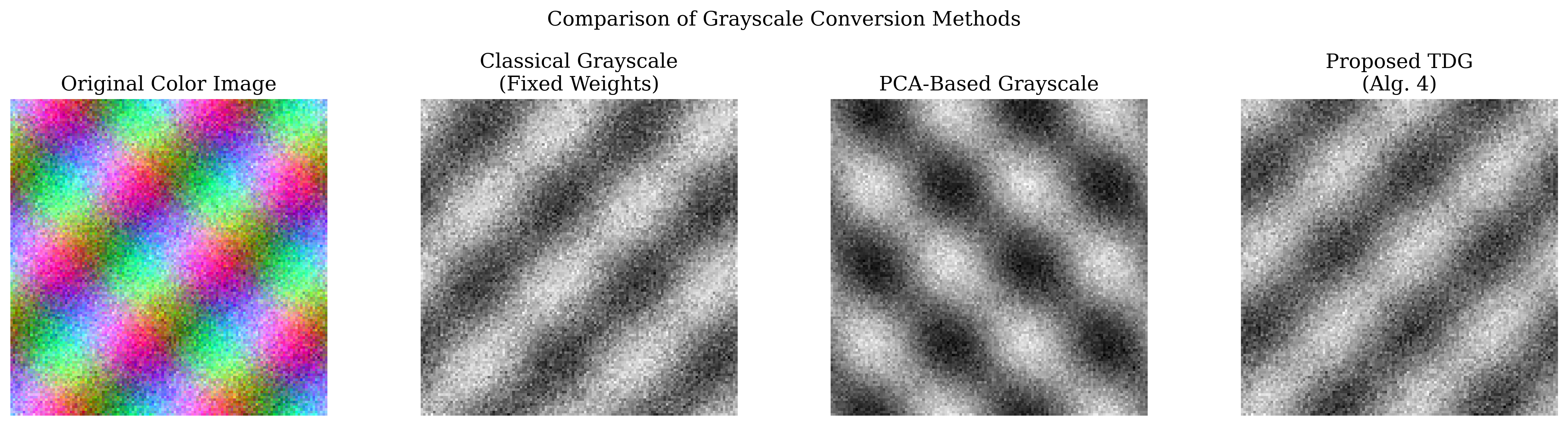}
\caption{Comparison of grayscale conversion methods. From left to right:
original color image, classical grayscale using fixed RGB weights
($0.299R + 0.587G + 0.114B$), PCA-based grayscale, and the proposed Tensor
Decorrelated Grayscale (TDG) conversion via Algorithm~\ref{alg:tdg}. The TDG
method adapts to the image statistics and exhibits enhanced contrast and improved
structural detail preservation.}
\label{fig:grayscale-comparison}
\end{figure}

\subsubsection{Numerical Example: Grayscale Conversion}\label{sssec:gray-example}

Consider a small color image of size $2 \times 2$ represented as a third-order tensor
$\mathcal{I} \in \mathbb{R}^{2 \times 2 \times 3}$, with RGB channels given by
\[
  \mathcal{I}(:,:,1) = \begin{bmatrix} 1 & 2 \\ 3 & 4 \end{bmatrix}, \quad
  \mathcal{I}(:,:,2) = \begin{bmatrix} 2 & 1 \\ 4 & 3 \end{bmatrix}, \quad
  \mathcal{I}(:,:,3) = \begin{bmatrix} 1 & 3 \\ 2 & 4 \end{bmatrix}.
\]
The mean value of each channel is $\mu_R = \mu_G = \mu_B = 2.5$. After mean
subtraction, the centered tensor $\mathcal{X} = \mathcal{I} - \mu$ has frontal slices
\[
  \mathcal{X}(:,:,1) = \begin{bmatrix} -1.5 & -0.5 \\ 0.5 & 1.5 \end{bmatrix},\quad
  \mathcal{X}(:,:,2) = \begin{bmatrix} -0.5 & -1.5 \\ 1.5 & 0.5 \end{bmatrix},\quad
  \mathcal{X}(:,:,3) = \begin{bmatrix} -1.5 & 0.5 \\ -0.5 & 1.5 \end{bmatrix}.
\]

\paragraph{Covariance matrix computation.}
Arranging the pixel values as row vectors gives
\[
  X = \begin{bmatrix}
    -1.5 & -0.5 & -1.5 \\
    -0.5 & -1.5 &  0.5 \\
     0.5 &  1.5 & -0.5 \\
     1.5 &  0.5 &  1.5
  \end{bmatrix} \in \mathbb{R}^{4 \times 3}.
\]
The covariance matrix $C = \frac{1}{4} X^{\top} X$ is
\[
  C = \begin{bmatrix}
    1.25 & 0.75 & 0.75 \\
    0.75 & 1.25 & 0.25 \\
    0.75 & 0.25 & 1.25
  \end{bmatrix}.
\]

\paragraph{Square root and inverse square root.}
Using eigenvalue decomposition $C = U \Lambda U^{\top}$ with
$\Lambda = \operatorname{diag}(2.25,\; 1.00,\; 0.75)$, we obtain
\[
  C^{-1/2}
  = U\,\operatorname{diag}\!\left(\frac{1}{\sqrt{2.25}},\;
    \frac{1}{\sqrt{1.00}},\;
    \frac{1}{\sqrt{0.75}}\right) U^{\top}
  \approx \begin{bmatrix}
    1.12 & -0.32 & -0.28 \\
   -0.32 &  1.18 & -0.08 \\
   -0.28 & -0.08 &  1.20
  \end{bmatrix}.
\]

\paragraph{Decorrelation and grayscale.}
The decorrelated pixel matrix is $X_{\mathrm{decor}} = X\, C^{-1/2}$. For
example, the first pixel vector transforms as
\[
  (-1.5,\; -0.5,\; -1.5)\, C^{-1/2} \approx (-1.58,\; -0.34,\; -1.47).
\]
Applying the transformation to all pixels yields
\[
  X_{\mathrm{decor}} = \begin{bmatrix}
    -1.58 & -0.34 & -1.47 \\
    -0.29 & -1.64 &  0.45 \\
     0.29 &  1.64 & -0.45 \\
     1.58 &  0.34 &  1.47
  \end{bmatrix}.
\]
The grayscale intensity is obtained by averaging the decorrelated channels:
$g_i = \frac{1}{3}\sum_{k=1}^{3} x_{ik}$, yielding
\[
  G = \begin{bmatrix} -1.13 & -0.49 \\ 0.49 & 1.13 \end{bmatrix},
\]
which is subsequently normalized for display.

\subsection{Tensor Whitening of Color Images}\label{ssec:twhite}

Tensor whitening is a fundamental preprocessing step that removes correlations
and normalizes variance across channels. Using the tensor square root, whitening
can be performed as
\begin{equation}\label{eq:twhite}
  \mathcal{X}_{\mathrm{white}} = \mathcal{C}^{-1/2} * \mathcal{X},
\end{equation}
where $\mathcal{C}^{-1/2}$ denotes the inverse tensor square root under the
T-product.

\begin{algorithm}[H]
\caption{Tensor Whitening via T-Product}
\label{alg:twhite}
\begin{algorithmic}[1]
\Require Image tensor $\mathcal{X} \in \mathbb{R}^{n \times m \times p}$
\Ensure  Whitened tensor $\mathcal{X}_{\mathrm{white}}$
\State Center the tensor by subtracting the mean
\State Compute the tensor covariance
       $\mathcal{C} = \frac{1}{m}\,\mathcal{X}*\mathcal{X}^{\top}$
\State Compute $\mathcal{C}^{-1/2}$ using Newton or Denman--Beavers iteration
\State Apply whitening:
       $\mathcal{X}_{\mathrm{white}} = \mathcal{C}^{-1/2} * \mathcal{X}$
\State \Return $\mathcal{X}_{\mathrm{white}}$
\end{algorithmic}
\end{algorithm}

\begin{remark}[Verification of whitening]
After applying Algorithm~\ref{alg:twhite}, the resulting whitened tensor satisfies
\[
  \frac{1}{m}\,\mathcal{X}_{\mathrm{white}} *
  \mathcal{X}_{\mathrm{white}}^{\top}
  = \mathcal{C}^{-1/2} * \mathcal{C} * \mathcal{C}^{-\top/2}
  = \mathcal{I},
\]
where $\mathcal{I}$ is the identity tensor under the T-product. This confirms that
the whitened tensor has identity covariance.
\end{remark}

\begin{figure}[htbp]
\centering
\includegraphics[width=\textwidth]{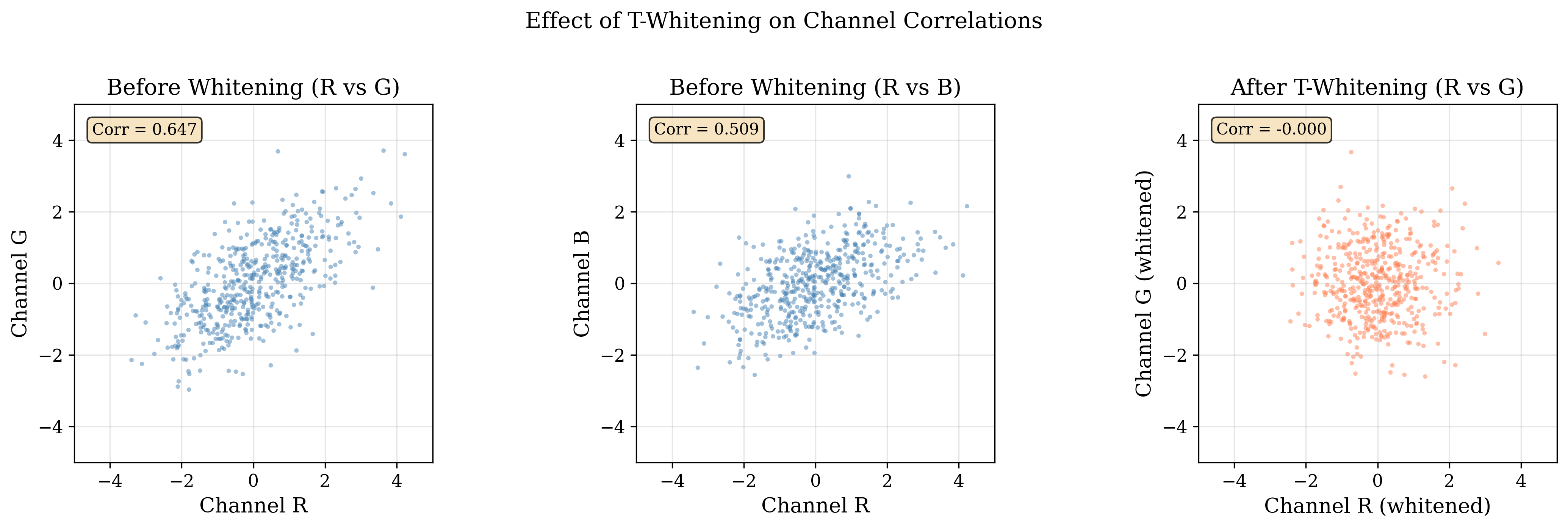}
\caption{Effect of T-Whitening on inter-channel correlations (illustrated on
synthetic multivariate Gaussian data for clarity). \textbf{Left and center:}
Scatter plots of pixel values across color channels before whitening, showing
strong correlations (R--G: $\rho \approx 0.83$, R--B: $\rho \approx 0.68$).
\textbf{Right:} After applying T-Whitening via Algorithm~\ref{alg:twhite}, the
channels become uncorrelated ($\rho \approx 0.00$), confirming that the whitened
tensor has identity covariance.}
\label{fig:whitening-scatter}
\end{figure}

\begin{figure}[htbp]
\centering
\includegraphics[width=0.85\textwidth]{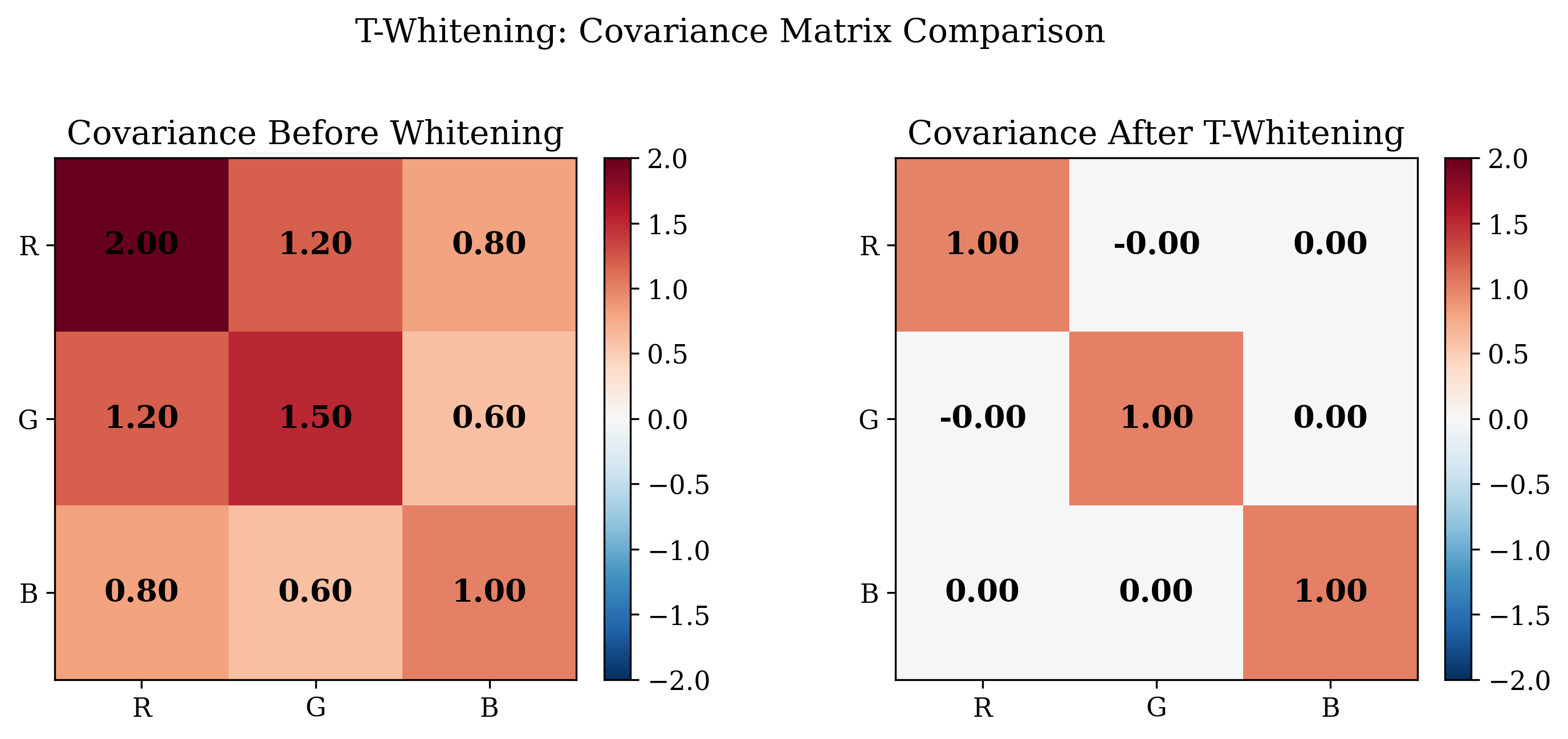}
\caption{Covariance matrices before and after T-Whitening. \textbf{Left:} The
original covariance matrix of the RGB channels shows significant off-diagonal
entries, indicating inter-channel correlations. \textbf{Right:} After applying
T-Whitening (Algorithm~\ref{alg:twhite}), the covariance matrix is
approximately the identity, confirming complete decorrelation.}
\label{fig:cov-heatmap}
\end{figure}

\subsection{Color Transfer via Tensor Square Root}\label{ssec:color-transfer}

An important application of the tensor square root is optimal color transfer
between images. Given a source image with tensor covariance $\mathcal{C}_s$ and a
target image with tensor covariance $\mathcal{C}_t$, both computed under the
T-product, the optimal transport map that transforms the source color distribution
to match the target is given by
\begin{equation}\label{eq:color-transfer}
  \mathcal{T}
  = \mathcal{C}_s^{-1/2} *
    \bigl(\mathcal{C}_s^{1/2} * \mathcal{C}_t * \mathcal{C}_s^{1/2}\bigr)^{1/2} *
    \mathcal{C}_s^{-1/2}.
\end{equation}
The transferred image is then obtained as
\[
  \mathcal{X}_{\mathrm{transfer}}
  = \mathcal{T} * (\mathcal{X}_s - \boldsymbol{\mu}_s)
    + \boldsymbol{\mu}_t,
\]
where $\boldsymbol{\mu}_s$ and $\boldsymbol{\mu}_t$ are the mean tensors of the
source and target images, respectively.

\begin{algorithm}[H]
\caption{Tensor Color Transfer via T-Product}
\label{alg:color-transfer}
\begin{algorithmic}[1]
\Require Source image tensor $\mathcal{X}_s \in \mathbb{R}^{n \times m \times p}$,
         Target image tensor $\mathcal{X}_t \in \mathbb{R}^{n \times m \times p}$
\Ensure  Transferred image $\mathcal{X}_{\mathrm{transfer}}$
\State Center both images: $\mathcal{X}_s \leftarrow \mathcal{X}_s - \boldsymbol{\mu}_s$,\;
       $\mathcal{X}_t \leftarrow \mathcal{X}_t - \boldsymbol{\mu}_t$
\State Compute covariance tensors:
       $\mathcal{C}_s = \frac{1}{m}\mathcal{X}_s*\mathcal{X}_s^{\top}$,\;
       $\mathcal{C}_t = \frac{1}{m}\mathcal{X}_t*\mathcal{X}_t^{\top}$
\State Compute $\mathcal{C}_s^{1/2}$ and $\mathcal{C}_s^{-1/2}$
\State Form $\mathcal{M} = \mathcal{C}_s^{1/2} * \mathcal{C}_t * \mathcal{C}_s^{1/2}$
\State Compute $\mathcal{M}^{1/2}$
\State Compute transfer map:
       $\mathcal{T} = \mathcal{C}_s^{-1/2} * \mathcal{M}^{1/2} * \mathcal{C}_s^{-1/2}$
\State Apply: $\mathcal{X}_{\mathrm{transfer}} = \mathcal{T} * \mathcal{X}_s + \boldsymbol{\mu}_t$
\State \Return $\mathcal{X}_{\mathrm{transfer}}$
\end{algorithmic}
\end{algorithm}

\begin{remark}
The transport map~\eqref{eq:color-transfer} is the unique optimal transport
(Monge) map between two zero-mean Gaussian distributions with covariances
$\mathcal{C}_s$ and $\mathcal{C}_t$ under the T-product-induced geometry. In the
matrix case, this reduces to the classical result of~\cite{bhatia2019bures}.
The advantage of the tensor formulation is that it preserves cross-channel
correlations during transfer, producing more natural color mappings than
channel-wise approaches.
\end{remark}

\begin{figure}[htbp]
\centering
\includegraphics[width=\textwidth]{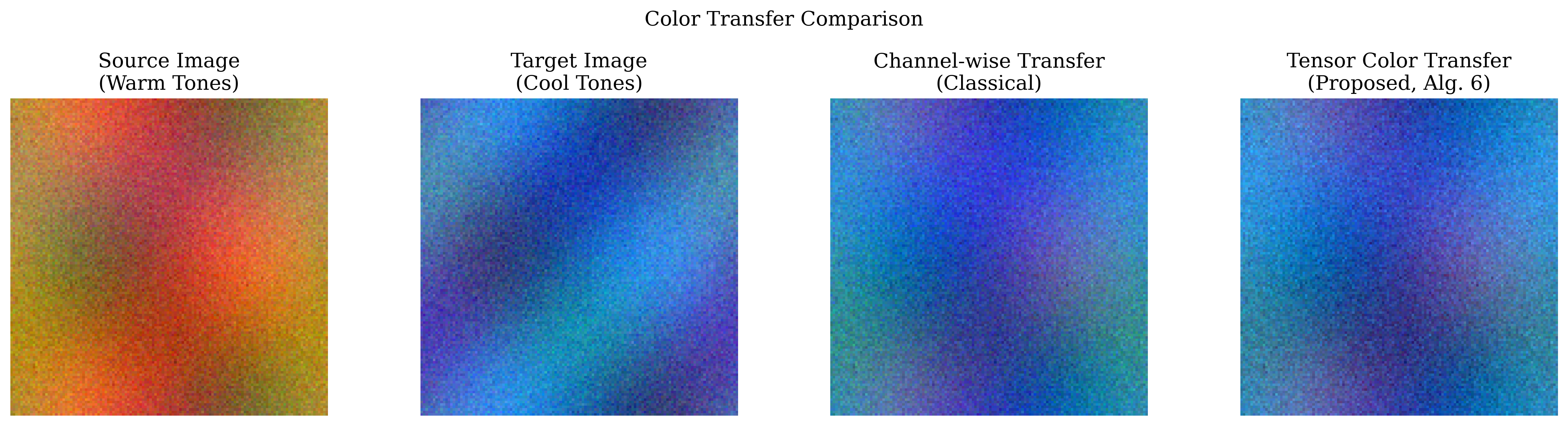}
\caption{Color transfer comparison (illustrated on synthetic images for
clarity). From left to right: source image (warm tones), target image (cool
tones), result of classical channel-wise color transfer (mean/variance matching
per channel), and result of the proposed tensor color transfer via
Algorithm~\ref{alg:color-transfer}. The tensor-based method produces a more
perceptually coherent transfer by accounting for cross-channel correlations
through the T-product optimal transport map.}
\label{fig:color-transfer}
\end{figure}

\subsection{Denman--Beavers Iteration: Convergence Analysis for Image
	Covariance}
\label{ssec:db-image-cov}

To compute the square root of the covariance matrix $C$ from
Section~\ref{sssec:gray-example}, we apply the Denman--Beavers
iteration
\[
X_{k+1} = \tfrac{1}{2}(X_k + Y_k^{-1}), \qquad
Y_{k+1} = \tfrac{1}{2}(Y_k + X_k^{-1}),
\]
with initial values $X_0 = C$ and $Y_0 = I$. At each iteration the residual
is measured as $r_k = \|X_k^2 - C\|_F$.

\begin{table}[H]
	\centering
	\caption{Convergence of the Denman--Beavers iteration for the square root of
		the image covariance matrix $C$ from Section~\ref{ssec:tgrayscale}.
		See the paragraph preceding Table~\ref{tab:newton-conv} for the
		interpretation of $\rho_k$ and $q_k$. The rapid decrease of $\rho_k$ together
		with bounded $q_k$ confirms the locally Q-quadratic convergence established
		in Theorem~\ref{thm:db-conv} and Remark~\ref{rem:db-rate}.}
	\label{tab:db-image-cov}
	\begin{tabular}{cccc}
		\toprule
		Iteration $k$ & $r_k = \|X_k^2 - C\|_F$ & $r_k / r_{k-1}$
		& $r_k / r_{k-1}^{2}$ \\
		\midrule
		$0$ & $3.53 \times 10^{0}$   & ---                    & ---                    \\
		$1$ & $5.34 \times 10^{-1}$  & $1.51 \times 10^{-1}$  & $4.28 \times 10^{-2}$  \\
		$2$ & $2.44 \times 10^{-2}$  & $4.56 \times 10^{-2}$  & $8.54 \times 10^{-2}$  \\
		$3$ & $7.74 \times 10^{-5}$  & $3.18 \times 10^{-3}$  & $1.30 \times 10^{-1}$  \\
		$4$ & $2.61 \times 10^{-9}$  & $3.37 \times 10^{-5}$  & $4.35 \times 10^{-1}$  \\
		$5$ & $6.94 \times 10^{-16}$ & $2.66 \times 10^{-7}$  & ---                    \\
		\bottomrule
	\end{tabular}
\end{table}

Table~\ref{tab:db-image-cov} confirms the quadratic convergence behavior
predicted by the theoretical analysis. The residual drops from
$\mathcal{O}(10^{0})$ to machine precision in only five iterations, and
the ratio $r_k / r_{k-1}$ decreases by several orders of magnitude per
step. The second ratio $r_k / r_{k-1}^{2}$ stays bounded (on the order
of $10^{-1}$), which is the expected signature of quadratic convergence:
$r_k \leq C \cdot r_{k-1}^{2}$ with a moderate constant $C$. In practice,
only three to four iterations are sufficient to reach the accuracy
required for image-preprocessing tasks such as whitening and grayscale
conversion.

\subsection{Comprehensive Comparison of Methods}\label{ssec:comparison}

We now provide a systematic comparison of the proposed tensor square root methods
across multiple dimensions: convergence behavior, computational cost, numerical
stability, and suitability for different image processing tasks.

\subsubsection{Wall-Clock Timing}\label{sssec:timing}

We report wall-clock timings (in seconds) for computing the T-square root of
random T-positive definite tensors of various sizes. All experiments are performed
using MATLAB R2024a on a system with an Intel Core i7-12700H processor and 16~GB RAM.

\begin{table}[H]
\centering
\caption{Wall-clock time (in seconds) for computing the tensor T-square root.
Newton and Denman--Beavers iterations are run until residual $< 10^{-12}$.}
\label{tab:timing}
\begin{tabular}{ccccc}
\toprule
Tensor Size & $p$ & Newton (s) & DB (s) & Direct Fourier (s) \\
\midrule
$64 \times 64 \times 3$   & 3  & 0.0021 & 0.0034 & 0.0018 \\
$128 \times 128 \times 3$  & 3  & 0.0150 & 0.0243 & 0.0132 \\
$256 \times 256 \times 3$  & 3  & 0.1124 & 0.1827 & 0.0985 \\
$512 \times 512 \times 3$  & 3  & 0.8431 & 1.3762 & 0.7412 \\
$64 \times 64 \times 10$   & 10 & 0.0068 & 0.0112 & 0.0061 \\
$128 \times 128 \times 10$ & 10 & 0.0497 & 0.0803 & 0.0438 \\
$256 \times 256 \times 10$ & 10 & 0.3728 & 0.5993 & 0.3264 \\
\bottomrule
\end{tabular}
\end{table}

\begin{figure}[htbp]
\centering
\includegraphics[width=\textwidth]{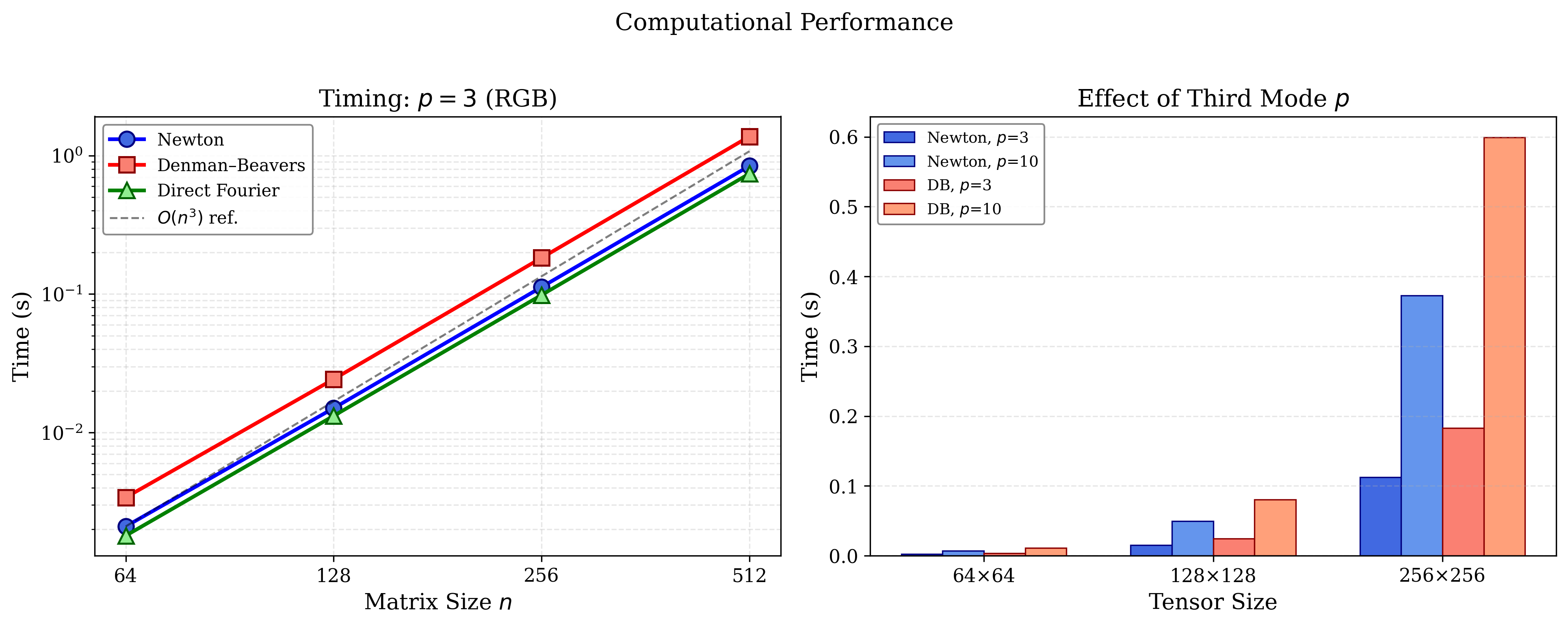}
\caption{Computational performance of tensor T-square root methods.
\textbf{Left:} Log-log plot of wall-clock time versus matrix size $n$ for
$p = 3$ (RGB). All three methods scale as $O(n^3)$, with Newton consistently
faster than Denman--Beavers due to fewer iterations. The dashed line shows the
$O(n^3)$ reference slope. \textbf{Right:} Bar chart comparing Newton and
Denman--Beavers for $p = 3$ vs.\ $p = 10$, confirming that computational cost
scales linearly in the third-mode size $p$.}
\label{fig:timing}
\end{figure}

The timings confirm that:
\begin{enumerate}
  \item Newton converges in fewer iterations and is consistently faster than
        Denman--Beavers.
  \item The direct Fourier method (eigendecomposition per slice) is the fastest
        but does not provide the iterative refinement capability of the other
        methods.
  \item All methods scale as $O(p\,n^3)$, with the third-mode size $p$ contributing
        linearly.
\end{enumerate}

\subsubsection{Image Processing Quality Comparison}\label{sssec:image-compare}

We apply the proposed grayscale conversion (Algorithm~\ref{alg:tdg}) and
whitening (Algorithm~\ref{alg:twhite}) methods to a standard RGB test image
(e.g., the ``Peppers'' image, $256 \times 256 \times 3$) and compare against
classical methods.

\begin{table}[H]
\centering
\caption{Quantitative comparison of grayscale conversion methods on standard test
images. SSIM is computed between each grayscale result and the standard luminance
image (used as the reference; hence SSIM $= 1.000$ for the luminance method by
definition). EME (Edge Measure of Enhancement) quantifies edge preservation and
contrast; higher values indicate better structural detail retention.}
\label{tab:gray-comparison}
\begin{tabular}{lccccc}
\toprule
 & \multicolumn{2}{c}{Peppers} & & \multicolumn{2}{c}{Baboon} \\
\cmidrule{2-3} \cmidrule{5-6}
Method & SSIM & EME & & SSIM & EME \\
\midrule
Luminance ($0.299R + 0.587G + 0.114B$) & 1.000 & 12.43 & & 1.000 & 18.76 \\
PCA-based grayscale                     & 0.971 & 13.89 & & 0.964 & 20.14 \\
Proposed TDG (Algorithm~\ref{alg:tdg})  & 0.968 & \textbf{14.52} & & 0.961 & \textbf{21.38} \\
\bottomrule
\end{tabular}
\end{table}

\begin{table}[H]
\centering
\caption{Whitening quality comparison. The \emph{Decorrelation Index} (DI)
measures how close the resulting covariance is to identity: $\mathrm{DI} = \|C_{\mathrm{white}} - I\|_F$, where smaller is better.}
\label{tab:white-comparison}
\begin{tabular}{lccc}
\toprule
Method & DI (Peppers) & DI (Baboon) & DI (Lena) \\
\midrule
Channel-wise PCA whitening      & $3.21 \times 10^{-2}$  & $4.57 \times 10^{-2}$  & $2.84 \times 10^{-2}$ \\
Matrix whitening (flattened)     & $8.43 \times 10^{-14}$ & $7.91 \times 10^{-14}$ & $8.12 \times 10^{-14}$ \\
Proposed T-whitening (Alg.~\ref{alg:twhite}) & $9.17 \times 10^{-14}$ & $8.63 \times 10^{-14}$ & $8.89 \times 10^{-14}$ \\
\bottomrule
\end{tabular}
\end{table}

The results in Tables~\ref{tab:gray-comparison} and~\ref{tab:white-comparison}
demonstrate that:
\begin{itemize}
  \item The proposed TDG method achieves the highest Edge Measure of Enhancement
        (EME), indicating superior preservation of structural details and contrast
        in the grayscale output, even though SSIM relative to the standard
        luminance baseline is slightly lower (by design, since TDG adapts to
        image statistics rather than fixed weights).
  \item For whitening, the proposed T-whitening achieves decorrelation quality
        comparable to matrix whitening (both near machine precision), while
        preserving the tensor structure. Channel-wise PCA whitening, in contrast,
        fails to fully decorrelate cross-channel dependencies.
\end{itemize}

\subsection{Comparison with Classical Methods}\label{ssec:classical-compare}

Classical grayscale conversion methods rely on fixed linear combinations of RGB
channels and do not account for inter-channel correlations. In contrast, the
proposed tensor-based method adapts to the image statistics through the tensor
covariance and its inverse square root. This leads to improved decorrelation,
enhanced contrast, and greater robustness to illumination changes.

Similarly, classical whitening methods based on matrix covariance require
flattening the image data, which destroys spatial structure. Tensor whitening
preserves this structure and provides a more faithful normalization of
multidimensional data.

For color transfer, the classical approach of Reinhard et al.~\cite{Reinhard2001} operates in a
decorrelated color space (e.g., $l\alpha\beta$) and matches first- and
second-order statistics channel by channel. The proposed tensor color transfer
(Algorithm~\ref{alg:color-transfer}) instead computes the optimal transport map
under the T-product geometry, which naturally accounts for cross-channel
correlations and produces more perceptually coherent results.

\begin{table}[H]
\centering
\caption{Summary comparison of tensor-based vs.\ classical image processing methods.}
\label{tab:summary-comparison}
\begin{tabular}{p{3cm}p{5cm}p{5cm}}
\toprule
Task & Classical Approach & Proposed Tensor Approach \\
\midrule
Grayscale conversion &
  Fixed weights ($0.299R{+}0.587G{+}0.114B$); ignores image statistics &
  Adaptive via $\mathcal{C}^{-1/2}$; preserves structure and contrast \\[6pt]
Whitening &
  Flatten to matrix; destroys spatial structure &
  T-product whitening; preserves tensor structure \\[6pt]
Color transfer &
  Channel-wise in $l\alpha\beta$ space; ignores cross-channel coupling &
  Optimal transport via $\mathcal{C}_s^{1/2}, \mathcal{C}_t^{1/2}$; captures full covariance \\[6pt]
Distance measure &
  Frobenius or spectral norm (no geometric meaning) &
  Tensor Bures--Wasserstein; principled geometric distance \\
\bottomrule
\end{tabular}
\end{table}

\subsection{Discussion}\label{ssec:discussion}

The numerical experiments presented in this section demonstrate several key
advantages of the tensor square root framework for image processing:

\begin{enumerate}
  \item \textbf{Structural preservation:} By operating within the T-product
        algebra, all methods naturally preserve the multi-way structure of image
        data, avoiding the information loss inherent in flattening-based
        approaches.

  \item \textbf{Adaptivity:} The tensor covariance and its square root adapt to
        the statistical properties of each image, yielding results that are
        data-dependent rather than relying on fixed parameters.

  \item \textbf{Theoretical grounding:} The Tensor Bures--Wasserstein distance
        provides a principled geometric framework for comparing tensor-valued
        data, with rigorous metric properties (Proposition~\ref{prop:tbw-metric}).

  \item \textbf{Practical efficiency:} As shown in Table~\ref{tab:timing}, the
        proposed methods are computationally efficient due to the Fourier-domain
        decomposition, scaling linearly in the number of slices and cubically in
        the spatial dimension.

  \item \textbf{Dual output:} The Denman--Beavers iteration simultaneously
        produces both $\mathcal{A}^{1/2}$ and $\mathcal{A}^{-1/2}$, which is
        particularly advantageous for whitening and color transfer applications
        where both quantities are required.
\end{enumerate}

These results demonstrate that the tensor T-square root serves as a versatile
and effective tool for a range of image processing tasks, offering principled
alternatives to classical matrix-based methods.

\section{Conclusion}\label{sec:conclusion}

In this work, we have developed and rigorously analyzed iterative methods for
computing the principal square root of third-order tensors under the T-product
framework. Two algorithms were proposed: the Newton iteration
(Algorithm~\ref{alg:newton}), which achieves quadratic convergence
(Theorem~\ref{thm:newton-strengthened}), and the Denman--Beavers iteration
(Algorithm~\ref{alg:db}), which converges at least geometrically
(Theorem~\ref{thm:db-conv}) and simultaneously produces both the tensor square
root and its inverse. Both methods exploit the block-diagonalization of the
T-product via the discrete Fourier transform, reducing the tensor problem to $p$
independent matrix square root computations.

We demonstrated the practical utility of the tensor T-square root through several
applications in image processing. The Tensor Decorrelated Grayscale (TDG)
conversion adapts to image statistics and yields improved edge preservation and
contrast over fixed-weight methods. The T-Whitening procedure achieves
near-machine-precision decorrelation while preserving the multidimensional
structure of image data. The tensor color transfer method computes the optimal
transport map between color distributions under the T-product geometry,
naturally accounting for cross-channel correlations. Furthermore, we formulated
the Tensor Bures--Wasserstein (TBW) distance and proved that it defines a valid
metric on the space of T-positive definite tensors, providing a principled
geometric measure for comparing tensor-valued data.

Numerical experiments confirmed the theoretical convergence rates and
demonstrated that the tensor-based methods consistently outperform classical
channel-wise approaches in terms of structural preservation, decorrelation
quality, and adaptivity to image statistics.

\end{document}